\documentclass{amsart}
\usepackage{amsmath}
\usepackage{amsfonts}
\usepackage{amssymb}
\usepackage[latin2]{inputenc}
\usepackage{graphicx}
\usepackage{array}
\usepackage{url}
\DeclareMathOperator{\co}{\mathbb C}

\DeclareMathOperator{\lh}{\it L^{\rm2\it}_{h}(\Omega)\rm}
\newtheorem{theorem}{Theorem}[section]

\newtheorem{lemma}[theorem]{Lemma}
\newtheorem{observation}{Observation}
\theoremstyle{definition}

\begin{document}
  \title[completeness of a metric]{On the completeness of a metric related to the Bergman metric}
\author{\.Zywomir Dinew}
\address{Jagiellonian University\newline \indent Faculty of Mathematics and Computer Science\newline \indent
\L ojasiewicza 6\newline \indent 30-348 Krak\'ow\newline \indent Poland  }
\email{Zywomir.Dinew@im.uj.edu.pl}
\thanks{The author was partially supported  by the NCN grant
2011/01/B/ST1/00879}
\subjclass[2010]{Primary 32A36, 32A25; Secondary 32A40, 32F45, 32Q15}
\keywords{ Bergman metric, Ricci curvature, completeness, Kobayashi criterion, hyperconvex domains}
\begin{abstract} We study the completeness of a metric which is related to the Bergman metric of a bounded domain. We provide a criterion for its completeness in the spirit of the Kobayashi criterion for the completeness of the Bergman metric. In particular we prove that in hyperconvex domains our metric is complete. 

\end{abstract}
\maketitle

Recall that in a bounded domain $\Omega\subset\subset\co^{n}$ the Bergman metric is the K\"ahler metric with metric tensor 
\begin{equation}\label{bmdef} T_{i\bar j}(z):=\frac{\partial^{2}}{\partial z_{i}\partial \bar z_{j}}\log K(z,z), z\in\Omega, i,j=1,\dots, n,
\end{equation}
where $K(z,z)$ (or just $K$ for short) is the Bergman kernel (on the diagonal) of the domain $\Omega$.
The length of a vector  $X\in\co^{n}$ ($\cong T_{z}\Omega$) with respect to this metric at $z\in\Omega$ is 
\begin{equation}\label{vlength}
 \beta(z,X)=\beta_{\Omega}(z,X):=\sqrt{\sum_{i,j=1}^{n}T_{i\bar j}(z)X_{i}\bar X_{j}}.
\end{equation}
The Bergman distance between two points $z,\zeta\in\Omega$ is
\begin{equation}\label{bdistance}
dist_{\Omega}(z,\zeta):=\inf_{\gamma \in S}\left\lbrace\int_{0}^{1}\beta(\gamma(t),\gamma'(t))dt \right\rbrace,
 \end{equation}
where $S$ stands for the space of continuous piecewise $\mathcal C^{1}$ and parametrized by the interval $[0,1]$ curves with images in $\Omega$, for which $\gamma(0)=z,\gamma(1)=\zeta$.

 The completeness of the Bergman metric  of $\Omega$ is the property that every Cauchy sequence with respect to $dist_{\Omega}$ has a limit point in $\Omega$ or equivalently, by the Hopf-Rinow theorem, that for any $z\in\Omega, z_{0}\in \partial \Omega$, $$\lim_{\Omega\ni\zeta\to z_{0}}dist_{\Omega}(z,\zeta)=\infty,$$  where the limit is with respect to the Euclidean topology. The completeness of the Bergman metric of bounded domains in $\co^{n}$ has been studied extensively over the years (see \cite{MR0112162}, \cite{MR618233}, \cite{MR1044868},\cite{MR1650305},\cite{MR1704301},\cite{MR1714284},\cite{MR1751175}, \cite{MR1812111}, \cite{MR1800767} for chronological development  and  \cite{MR1314035},\cite{MR2139520} for qualitative results).  
 In this paper we study the completeness of the following (closely related) K\"ahler metric. 
 
\begin{equation}\label{riccimetric}
 \tilde T_{i\bar j}(z):=\left((n+1)T_{i\bar j}(z)+\frac{\partial^{2}}{\partial z_{i}\partial \bar z_{j}}\log \det(T_{p\bar q}(z)_{p,q=1,\dots,n})\right).
\end{equation}
By the well-known formula expressing the Ricci curvature of a K\"ahler metric this can be interpreted as 
\begin{equation}\label{riccimetric1}
 \tilde T_{i\bar j}(z)=(n+1)T_{i\bar j}(z)- Ric_{i\bar j},
\end{equation}
where $Ric_{i\bar j}$ is the Ricci tensor of the Bergman metric. It is well known estimate that the Ricci curvature of the Bergman metric is bounded from above by $n+1$ (see \cite{MR0112162}) and hence it follows that $\tilde T_{i\bar j}$ is positive definite and so it is indeed a metric. This metric enjoys many of the properties of the Bergman metric. In particular it is invariant with respect to biholomorphic mappings. 

 As above, we define
\begin{equation}\label{tildebeta} \tilde\beta(z,X):=\sqrt{\sum_{i,j=1}^{n}\tilde T_{i\bar j}(z)X_{i}\bar X_{j}}\end{equation} and
 \begin{equation}\label{tildedist}\tilde {dist}_{\Omega}(z,\zeta):=\inf_{\gamma \in S}\left\lbrace\int_{0}^{1}\tilde\beta(\gamma(t),\gamma'(t))dt \right\rbrace.\end{equation}  
The  completeness of $\tilde T_{i\bar j}$ is likewise defined as the property that  every Cauchy sequence with respect to $\tilde {dist}_{\Omega}$ has a limit point in $\Omega$ or equivalently that for any $z\in\Omega, z_{0}\in \partial \Omega$, \begin{equation}\label{tildecomplete}\lim_{\Omega\ni\zeta\to z_{0}}\tilde{dist}_{\Omega}(z,\zeta)=\infty.\end{equation}
 Another important property that is shared with the Bergman metric is the fact that domains, which are complete with respect to $\tilde T_{i\bar j}$,  are necessarily pseudoconvex (for the Bergman metric this follows by an old theorem by Bremermann \cite{MR0074058}, for $\tilde T_{i\bar j}$ the proof is virtually the same). For this reason we will restrict our attention to bounded pseudoconvex domains in $\co^{n}$ throughout the paper.

Clearly, if one of the metrics $ T_{i\bar j}, \tilde T_{i\bar j}$ dominates some non-negative multiple the other, then trivially  its completeness follows from the completeness of the dominated metric.  We have

\begin{observation}
 For $\Omega\subset\subset\co^{n}$ 

a) if $ T_{i\bar j}$ is complete and the Ricci curvature of the Bergman metric is bounded above by a constant $C_{1}$, with $C_{1}<n+1$, then $\tilde T_{i\bar j}$ is complete;

b) if $ \tilde T_{i\bar j}$ is complete and the Ricci curvature of the Bergman metric is bounded below by a constant $C_{2}$,  then $ T_{i\bar j}$ is complete.

\end{observation}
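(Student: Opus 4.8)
The plan is to derive both statements from pointwise comparisons of the two metric tensors, using the identity \eqref{riccimetric1}, and then to transfer these comparisons to the induced distances. Throughout, ``the Ricci curvature of the Bergman metric is bounded above by $C_{1}$'' is understood in the sense of Hermitian forms, i.e.\ that $C_{1}T_{i\bar j}(z)-Ric_{i\bar j}(z)$ is positive semidefinite for every $z\in\Omega$, and analogously for a lower bound.

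For part a) I would argue as follows. By \eqref{riccimetric1} and the assumed bound $Ric_{i\bar j}\le C_{1}T_{i\bar j}$ one has, as Hermitian forms,
\[
\tilde T_{i\bar j}=(n+1)T_{i\bar j}-Ric_{i\bar j}\ \ge\ (n+1-C_{1})\,T_{i\bar j},
\]
and $n+1-C_{1}>0$ by hypothesis. Evaluating this form inequality on a vector $X$ and taking square roots gives $\tilde\beta(z,X)\ge\sqrt{n+1-C_{1}}\,\beta(z,X)$ for all $z\in\Omega$ and all $X$. Integrating along an arbitrary $\gamma\in S$ and passing to the infimum in \eqref{bdistance} and \eqref{tildedist} yields $\tilde{dist}_{\Omega}(z,\zeta)\ge\sqrt{n+1-C_{1}}\,dist_{\Omega}(z,\zeta)$. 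Hence, if $T_{i\bar j}$ is complete then for $z\in\Omega$ and $z_{0}\in\partial\Omega$ we have $dist_{\Omega}(z,\zeta)\to\infty$ as $\zeta\to z_{0}$, so $\tilde{dist}_{\Omega}(z,\zeta)\to\infty$ as well, which is \eqref{tildecomplete}.

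For part b) the reasoning is dual. The lower bound $Ric_{i\bar j}\ge C_{2}T_{i\bar j}$ gives, again by \eqref{riccimetric1},
\[
\tilde T_{i\bar j}=(n+1)T_{i\bar j}-Ric_{i\bar j}\ \le\ (n+1-C_{2})\,T_{i\bar j}.
\]
Here one should note that the constant $n+1-C_{2}$ is necessarily strictly positive: otherwise the right-hand side would be a non-positive Hermitian form dominating the positive definite form $\tilde T_{i\bar j}$, which is impossible. Consequently $\beta(z,X)\ge\frac{1}{\sqrt{n+1-C_{2}}}\,\tilde\beta(z,X)$, hence $dist_{\Omega}(z,\zeta)\ge\frac{1}{\sqrt{n+1-C_{2}}}\,\tilde{dist}_{\Omega}(z,\zeta)$ after the same integration-and-infimum step, and completeness of $\tilde T_{i\bar j}$ forces completeness of $T_{i\bar j}$.

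I do not expect a genuine obstacle in this argument; it is precisely the ``domination'' principle recorded just before the observation. The only points requiring care are the correct direction of the form inequalities (which tensor dominates which, and with which sign of the constant) and the routine but essential remark that a pointwise inequality between the metric tensors passes to the induced distance functions, which holds because the infima in \eqref{bdistance} and \eqref{tildedist} run over the same family $S$ of admissible curves.
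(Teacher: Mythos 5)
Your argument is correct and is exactly the ``domination'' principle the paper invokes in the sentence immediately preceding the Observation; the paper gives no further proof, treating the pointwise form inequality $\tilde T_{i\bar j}\gtrless c\,T_{i\bar j}$ and its passage to the distances as trivial. Your write-up simply supplies those routine details (including the correct remark that $n+1-C_{2}>0$ is forced by positive definiteness of $\tilde T_{i\bar j}$), so it matches the intended proof.
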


In the special case of  a strongly pseudoconvex domain $\Omega$, Fefferman's asymptotic expansion of the Bergman kernel (see \cite{MR0350069}) allows one to compute that the Ricci tensor of the Bergman metric tends to minus identity at the boundary of $\Omega$ (see also \cite{MR1398092}). This, together with the fact that the Bergman metric is complete in strongly pseudoconvex domains, gives one immediately that $\tilde T_{i\bar j}$ is also complete.

Another instance, where the above observation can be used, is when $\Omega$ is a homogeneous bounded domain. Then the Bergman metric is K\"ahler-Einstein and hence it's Ricci curvature is constant. The completeness of $\tilde T_{i\bar j}$ immediately follows.

In general, however, we cannot expect that the conditions on the Ricci curvature of the Bergman metric from the above observation will hold. In fact very few is known about the behavior of the Ricci curvature of the Bergman metric in general bounded domains.    In \cite{MR2640210} and \cite{MR2658116} explicit examples of domains for which both conditions are violated were found. Moreover, such a domain can be hyperconvex (recall that hyperconvex domain is a domain for which there exists a bounded plurisubharmonic (in dimension $1$ subharmonic) exhaustion function). This, together with the fact that bounded hyperconvex domains are complete with respect to the Bergman metric (see \cite{MR1650305} and \cite{MR1714284}), leads one to the following question: whether or not hyperconvex domains are complete with respect to $\tilde T_{i\bar j}$? A more general problem is to study in which classes of weakly pseudoconvex or even non-smooth pseudoconvex domains is $\tilde T_{i\bar j}$ complete.

The examples from \cite{MR2640210} and \cite{MR2658116} enable one to look at the problems studied in this paper form yet another perspective. The completeness of $\tilde T_{i\bar j}$ is equivalent to the completeness of $ T_{i\bar j}-\frac{1}{n+1}Ric_{i\bar j}$, which in certain cases may (presumably) be a gain in the study of the completeness of the Bergman metric.
\begin{section}{Criterions for completeness and statement of the results}

Denote by $\lh:=L^{2}(\Omega)\cap\mathcal O(\Omega)$ the space of square-integrable holomorphic functions. We will benefit from the methods developed to study the completeness of the Bergman metric.
 The main tool for the study of completeness of the Bergman metric is the following criterion due to Kobayashi \cite{MR0112162}, see also \cite{MR0141795}.
\begin{theorem}[Kobayashi]\label{koba}
 Let $\Omega\subset\subset\co^{n}$ be a bounded domain. If for every function $f\in\lh$ and for every boundary point $z_{0}\in\partial\Omega$ and for every sequence $\lbrace z_{s}\rbrace_{s=1}^{\infty}\subset\Omega$ of points in $\Omega$ with limit (in the Euclidean sense) $z_{0}$ there exists a subsequence $\lbrace z_{s_{k}}\rbrace_{k=1}^{\infty}$ such that
\begin{equation}\label{weakkob}\lim_{k\to\infty}\frac{|f(z_{s_{k}})|^2}{K(z_{s_{k}},z_{s_{k}})}=0,\end{equation}
then the Bergman metric of $\Omega$ is complete.
\end{theorem}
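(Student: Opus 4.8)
The plan is to reduce the statement to the assertion that every Bergman ball $B_\Omega(z,r):=\{\zeta\in\Omega:dist_\Omega(z,\zeta)<r\}$ is relatively compact in $\Omega$. Once this is known, a sequence $\{z_s\}\subset\Omega$ converging in the Euclidean sense to a boundary point $z_0$ must eventually leave every Bergman ball centered at a fixed interior point, so $dist_\Omega(z,z_s)\to\infty$, which is precisely the completeness criterion recalled before the theorem.

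The main tool will be the distance-decreasing property of the Bergman projective embedding. Fixing a complete orthonormal system $\{\phi_j\}_{j\geq 0}$ of $\lh$, the map $\Phi(z)=[\phi_0(z):\phi_1(z):\dots]$ into the complex projective space of $\lh$ is well defined (since $\sum_j|\phi_j(z)|^2=K(z,z)<\infty$) and holomorphic, and the identity $i\partial\bar\partial\log K=i\partial\bar\partial\log\sum_j|\phi_j|^2$ shows that the Bergman metric is exactly the pullback under $\Phi$ of the Fubini--Study metric, normalized so that its geodesic distance is $\arccos$ of the modulus of the normalized inner product. Hence $dist_\Omega$ dominates the pulled-back Fubini--Study distance, and, since for a fixed unit vector the Fubini--Study distance to it is $1$-Lipschitz, for every $f\in\lh$ with $\|f\|_{L^2}=1$ one obtains
\begin{equation*}
dist_\Omega(z,\zeta)\;\geq\;\left|\,\arccos\frac{|f(z)|}{\sqrt{K(z,z)}}-\arccos\frac{|f(\zeta)|}{\sqrt{K(\zeta,\zeta)}}\,\right| .
\end{equation*}

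The key step is then that every Bergman ball of radius $r<\pi/2$ is relatively compact in $\Omega$. Suppose not; then there are $z\in\Omega$, a radius $r<\pi/2$ and a sequence $\{w_s\}\subset B_\Omega(z,r)$ with $w_s\to w_0\in\partial\Omega$. I apply the hypothesis (\ref{weakkob}) to the normalized reproducing kernel $f:=K(\cdot,z)/\sqrt{K(z,z)}\in\lh$, which has unit $L^2$-norm and satisfies $|f(z)|/\sqrt{K(z,z)}=1$: along a subsequence $|f(w_{s_k})|^2/K(w_{s_k},w_{s_k})\to 0$, hence $\arccos\big(|f(w_{s_k})|/\sqrt{K(w_{s_k},w_{s_k})}\big)\to\pi/2$. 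The displayed inequality then forces $dist_\Omega(z,w_{s_k})\to\pi/2$, contradicting $dist_\Omega(z,w_{s_k})<r<\pi/2$.

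It remains to propagate relative compactness from radii below $\pi/2$ to all radii. First, for any compact $L\subset\subset\Omega$ and any $\rho<\pi/2$, the set $\bigcup_{w\in L}B_\Omega(w,\rho)$ is still relatively compact in $\Omega$ — otherwise a diagonal argument, together with the local comparability of $dist_\Omega$ with the Euclidean distance, produces a point of $L$ with a non-relatively-compact ball of radius $<\pi/2$, contradicting the key step. Then, fixing $\rho_0<\pi/2$ and using that $B_\Omega(z,R)\subset\bigcup_{w\in\overline{B_\Omega(z,R-\rho_0)}}B_\Omega(w,\rho_0)$, an induction on $\lceil R/\rho_0\rceil$ shows that every Bergman ball is relatively compact, which finishes the proof. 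The conceptual obstacle is exactly this propagation: via the inequality above a single function can never detect a Bergman distance exceeding $\pi/2$ — the diameter of the Fubini--Study metric — so completeness is invisible to any fixed $f$ and must be recovered through the relative compactness of balls; a lesser technical point is to make the projective-space arguments rigorous in infinite dimensions, in particular that the Fubini--Study distance between two points is realized along the projective line they span, so that the $\arccos$ formula and the distance-decreasing property of $\Phi$ hold as stated.
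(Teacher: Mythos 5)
Your argument is correct, but it takes a genuinely different route from the one this paper relies on. The paper does not prove Theorem \ref{koba} at all (it is quoted from \cite{MR0112162}, \cite{MR0141795}); its own template, visible in the proof of Theorem \ref{main} which ``proceeds as in \cite{MR0112162}'', is the contrapositive Cauchy-sequence argument: if the metric were incomplete, a Cauchy sequence with no limit in $\Omega$ would map under the isometric projective embedding to a Cauchy sequence in the \emph{complete} space $\mathbb P(\lh')$; its limit is a unit vector $\alpha$, and the Riesz dual $f=\alpha'$ satisfies $|f(z_{s_k})|^2/K(z_{s_k},z_{s_k})\to|\alpha(f)|^2=\Vert f\Vert^4>0$, contradicting (\ref{weakkob}). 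You use the same embedding only through the explicit Fubini--Study distance formula, deriving $dist_{\Omega}(z,\zeta)\ge\bigl|\arccos\bigl(|f(z)|K(z,z)^{-1/2}\bigr)-\arccos\bigl(|f(\zeta)|K(\zeta,\zeta)^{-1/2}\bigr)\bigr|$, testing with the normalized kernel $K(\cdot,z)/\sqrt{K(z,z)}$ to get relative compactness of Bergman balls of radius below $\pi/2$, and then propagating to all radii by a covering induction. The paper's route buys brevity: completeness of the projective Hilbert space absorbs both the ``$\pi/2$ barrier'' and the propagation step, so no ball-chaining is needed. Your route avoids lifting a convergent sequence of projective classes to convergent unit vectors (the phases $e^{i\theta_k}$) and makes explicit the geometric point that a single $f$ can only certify distances up to the Fubini--Study diameter $\pi/2$, which is exactly why the propagation step is unavoidable in your setup. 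The only blemish is the covering inclusion $B_{\Omega}(z,R)\subset\bigcup_{w\in\overline{B_{\Omega}(z,R-\rho_{0})}}B_{\Omega}(w,\rho_{0})$: choosing $w$ on a near-minimizing path gives $dist_{\Omega}(z,w)\le R-\rho_{0}$ but then only $dist_{\Omega}(w,v)<\rho_{0}+\varepsilon$, so one should induct in steps of, say, $\rho_{0}/2$; this is a routine adjustment and does not affect correctness.
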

This criterion has been modified by several authors (see e.g., \cite{MR2139520}) and a version with weaker assumptions is

\begin{theorem}[B\l ocki]\label{blo}
 Let $\Omega\subset\subset\co^{n}$ be a bounded domain. If for every non-zero $f\in\lh$ and for every boundary point $z_{0}\in\partial\Omega$ and for every sequence $\lbrace z_{s}\rbrace_{s=1}^{\infty}\subset\Omega$ of points in $\Omega$ with limit (in the Euclidean sense) $z_{0}$ there exists a subsequence $\lbrace z_{s_{k}}\rbrace_{k=1}^{\infty}$ such that
$$\lim_{k\to\infty}\frac{|f(z_{s_{k}})|^2}{K(z_{s_{k}},z_{s_{k}})}<\Vert f\Vert_{\lh}^{2},$$
then the Bergman metric of $\Omega$ is complete.
\end{theorem}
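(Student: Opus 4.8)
The plan is to prove this by contraposition: assuming $\Omega$ is \emph{not} complete with respect to $dist_\Omega$, I will exhibit a non-zero $f\in\lh$, a boundary point $z_{0}\in\partial\Omega$ and a sequence $\{z_{s}\}\subset\Omega$ converging to $z_{0}$ along which $|f(z_{s})|^{2}/K(z_{s},z_{s})\to\Vert f\Vert_{\lh}^{2}$; since then \emph{every} subsequential limit equals $\Vert f\Vert_{\lh}^{2}$, the hypothesis of the theorem fails for this $f$, $z_{0}$ and $\{z_{s}\}$, which is exactly the contrapositive.

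I would start from two preparatory observations. First, by the very definition of completeness there is a $dist_\Omega$-Cauchy sequence $\{z_{s}\}\subset\Omega$ with no $dist_\Omega$-limit in $\Omega$; since $\overline\Omega$ is compact it has Euclidean cluster points, and none of them can lie in $\Omega$, because near an interior point $dist_\Omega$ is comparable to the Euclidean distance, so a Euclidean cluster point in $\Omega$ would, together with Cauchyness, force a $dist_\Omega$-limit in $\Omega$. Hence, after passing to a subsequence, $z_{s}\to z_{0}\in\partial\Omega$ in the Euclidean sense. Secondly, setting $k_{z}:=K(\cdot,z)/\sqrt{K(z,z)}\in\lh$ (a unit vector), the reproducing property gives $|f(z)|^{2}/K(z,z)=|\langle f,k_{z}\rangle_{\lh}|^{2}$ for every $f\in\lh$, and $|\langle k_{z},k_{w}\rangle_{\lh}|=|K(z,w)|/\sqrt{K(z,z)K(w,w)}$; so the quantities in \eqref{weakkob} are precisely squared moduli of Hermitian products of normalized kernels, and — by the equality case of Cauchy--Schwarz — $|f(z)|^{2}/K(z,z)=\Vert f\Vert^{2}$ happens exactly when $k_{z}$ is a unimodular multiple of $f/\Vert f\Vert$.

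The crux is a comparison going back to Kobayashi \cite{MR0112162}: the Bergman kernel map $\iota\colon\Omega\to\mathbb P(\lh)$, $\iota(z)=[K(\cdot,z)]$, pulls the Fubini--Study metric back to a positive constant multiple of the Bergman metric, hence $dist_\Omega$ dominates that constant times the Fubini--Study geodesic distance of the images; since the latter dominates the chordal distance on $\mathbb P(\lh)$, there is a universal $c_{0}>0$ with
\[ dist_\Omega(z,w)\ \ge\ c_{0}\,\inf_{\theta\in\mathbb R}\big\Vert e^{i\theta}k_{z}-k_{w}\big\Vert_{\lh}\qquad(z,w\in\Omega). \]
Applied along the Cauchy sequence of the first observation, this shows that $\{[k_{z_{s}}]\}$ is a Cauchy sequence in the complete metric space $\mathbb P(\lh)$, hence converges; replacing each $k_{z_{s}}$ by a suitable unimodular multiple (which changes neither $|\langle f,k_{z_{s}}\rangle_{\lh}|$ nor $K(z_{s},z_{s})$) we may assume $k_{z_{s}}\to g$ in the norm of $\lh$, with $\Vert g\Vert_{\lh}=1$ because the $k_{z_{s}}$ are unit vectors. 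Taking $f:=g$ (non-zero, in $\lh$) finishes the argument: by norm convergence $|g(z_{s})|^{2}/K(z_{s},z_{s})=|\langle g,k_{z_{s}}\rangle_{\lh}|^{2}\to|\langle g,g\rangle_{\lh}|^{2}=\Vert g\Vert_{\lh}^{2}$, so along $\{z_{s}\}\to z_{0}\in\partial\Omega$ no subsequence can satisfy the strict inequality demanded in the statement.

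I expect the displayed comparison inequality — converting smallness of the Bergman \emph{distance} into $\lh$-closeness of normalized reproducing kernels modulo a unimodular factor — to be the only substantial ingredient; the metric-space and functional-analytic steps around it are routine. The same scheme also yields Theorem~\ref{koba}; what makes B{\l}ocki's version sharper is that its conclusion requires the normalized kernels to converge \emph{strongly} to a \emph{unit} vector, and this is exactly what the Cauchy (not merely $dist_\Omega$-bounded) character of a non-completeness witnessing sequence provides through the comparison inequality — a merely bounded sequence would only furnish a non-zero weak limit, which suffices for Kobayashi's criterion but not for B{\l}ocki's.
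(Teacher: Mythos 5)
Your proof is correct and follows essentially the same route the paper takes: the paper cites Theorem~\ref{blo} from \cite{MR2139520} without reproving it, but its proof of the generalization Theorem~\ref{main} is exactly your argument lifted to the wedge-product setting --- contraposition via a non-convergent Cauchy sequence, the distance-decreasing kernel embedding into a complete projective Hilbert space, strong convergence of the normalized reproducing kernels up to a unimodular factor, and Riesz duality to produce the extremal $f$.
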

We modify the methods of proof of Theorem \ref{blo}  and obtain our

\begin{theorem}\label{main}
 Let $\Omega\subset\subset\co^{n}$ be a bounded domain. If for every $n+1$- tuple of linearly independent $f_{0},f_{1},\dots,f_{n}\in \lh$ and for every boundary point $z_{0}\in\partial\Omega$ and for every sequence $\lbrace z_{s}\rbrace_{s=1}^{\infty}\subset\Omega$ of points in $\Omega$ with limit (in the Euclidean sense) $z_{0}$ there exists a subsequence $\lbrace z_{s_{k}}\rbrace_{k=1}^{\infty}$ such that

\begin{equation}\label{inmain} \left.\begin{array}{c} \lim_{k\to\infty}\frac{\left|\begin{matrix}\det\end{matrix}\begin{pmatrix}
                                                     f_{0}(z)&\dots &f_{n}(z)\\
\frac{ \partial f_{0}}{\partial z_{1}}(z)&\dots & \frac{ \partial f_{n}}{\partial z_{1}}(z)\\
\vdots&\ddots &\vdots\\
\frac{ \partial f_{0}}{\partial z_{n}}(z)&\dots &\frac{ \partial f_{n}}{\partial z_{n}}(z)\\
                                                    \end{pmatrix}
\right|^2 }{\begin{matrix}K^{n+1}\det\left(\frac{\partial^{2}}{\partial z_{i}\partial\bar z_{j}} \log K\right)\end{matrix}}\end{array}\right|_{z=z_{s_{k}}} \end{equation} $$<\det\begin{pmatrix} \langle f_{0},f_{0}\rangle_{\lh} &\cdots&\langle f_{n},f_{0}\rangle_{\lh}\\
 \vdots&\ddots&\vdots\\
  \langle f_{0},f_{n}\rangle_{\lh}&\cdots&\langle f_{n},f_{n}\rangle_{\lh} \end{pmatrix},$$
then $\tilde T_{i\bar j}$  is complete.
\end{theorem}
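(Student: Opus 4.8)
The plan is to upgrade the proof of Theorem~\ref{blo}: instead of comparing one function with the normalized reproducing kernel $K(\cdot,z)/\sqrt{K(z,z)}$, we compare an $(n{+}1)$-tuple with the full $1$-jet of the Bergman kernel. Fix $z\in\Omega$ and in $\lh$ put $\xi^{z}_{0}:=K(\cdot,z)$ and $\xi^{z}_{k}:=\partial K(\cdot,z)/\partial\bar z_{k}$ for $k=1,\dots,n$; these lie in $\lh$ and reproduce the $1$-jet, i.e.\ $\langle f,\xi^{z}_{0}\rangle_{\lh}=f(z)$ and $\langle f,\xi^{z}_{k}\rangle_{\lh}=\partial f(z)/\partial z_{k}$. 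A Schur-complement computation in the Gram matrix $\bigl(\langle\xi^{z}_{k},\xi^{z}_{l}\rangle_{\lh}\bigr)_{k,l=0}^{n}$ — whose $(0,0)$ entry is $K(z,z)$ and the Schur complement of that entry is, by \eqref{bmdef}, exactly $K(z,z)$ times the Bergman metric tensor $(T_{i\bar j}(z))$ — gives
\[
\det\bigl(\langle\xi^{z}_{k},\xi^{z}_{l}\rangle_{\lh}\bigr)_{k,l=0}^{n}=K(z,z)^{\,n+1}\det\bigl(T_{i\bar j}(z)\bigr),
\]
which is positive, so the $\xi^{z}_{k}$ are linearly independent and $\Psi(z):=\xi^{z}_{0}\wedge\cdots\wedge\xi^{z}_{n}\in\bigwedge^{n+1}\lh$ is a non-zero decomposable vector with $\|\Psi(z)\|^{2}$ equal to the denominator in \eqref{inmain}. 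Expanding the Jacobian-type determinant in the numerator of \eqref{inmain} by rows and using the determinant formula for inner products of decomposable vectors, that determinant equals $\langle f_{0}\wedge\cdots\wedge f_{n},\Psi(z)\rangle$, while the right-hand side of the inequality in Theorem~\ref{main} is $\|f_{0}\wedge\cdots\wedge f_{n}\|^{2}$. Hence, writing $F:=f_{0}\wedge\cdots\wedge f_{n}$ and $\hat v:=v/\|v\|$, hypothesis \eqref{inmain} says simply: along some subsequence $\lim_{k}|\langle\hat F,\widehat{\Psi(z_{s_{k}})}\rangle|^{2}<1$; and as $(f_{0},\dots,f_{n})$ runs over linearly independent tuples, $F$ runs over all non-zero decomposable elements of $\bigwedge^{n+1}\lh$. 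This is the exact analogue of Błocki's condition $|\langle\hat f,\widehat{K(\cdot,z)}\rangle|^{2}<1$.

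Next I would observe that, by \eqref{riccimetric} and the Schur-complement identity, $\tilde T_{i\bar j}(z)=\partial^{2}\log\|\Psi(z)\|^{2}/\partial z_{i}\partial\bar z_{j}$. Since $\Psi$ is a (conjugate-)holomorphic $\bigwedge^{n+1}\lh$-valued map, $\log\|\Psi\|^{2}$ is plurisubharmonic — re-proving that $\tilde T_{i\bar j}$ is a metric — and $\tilde T_{i\bar j}$ is precisely the pull-back, under $\iota\colon z\mapsto[\Psi(z)]$, of the Fubini--Study metric of $\mathbb P\bigl(\bigwedge^{n+1}\lh\bigr)$; the anti-holomorphy of $\iota$ is immaterial for lengths, exactly as for the Bergman metric and $z\mapsto[K(\cdot,z)]$. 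In particular the $\tilde T$-length of any curve in $\Omega$ equals the Fubini--Study length of its $\iota$-image.

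Now for the Błocki-type conclusion. Assume $\tilde T_{i\bar j}$ is not complete. Then (Hopf--Rinow, as recalled after \eqref{tildecomplete}, exactly as in the Bergman case) there is a curve $\gamma\colon[0,1)\to\Omega$ of finite $\tilde T$-length that eventually leaves every compact subset of $\Omega$; choose $t_{s}\nearrow1$ with $\gamma(t_{s})\to z_{0}\in\partial\Omega$. By the previous paragraph $\iota\circ\gamma$ has finite Fubini--Study length, and $\mathbb P\bigl(\bigwedge^{n+1}\lh\bigr)$ is complete for the Fubini--Study distance (the unit sphere of a Hilbert space is complete and passing to the $U(1)$-quotient preserves completeness), so $\iota(\gamma(t))\to[\Phi]$ as $t\to1$ for some $\Phi$ with $\|\Phi\|=1$. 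Each $\iota(\gamma(t))$ is decomposable, and the cone of decomposable elements of $\bigwedge^{n+1}\lh$ is closed (it is the common zero set of the continuous Plücker relations), so $\Phi=g_{0}\wedge\cdots\wedge g_{n}$ with $g_{0},\dots,g_{n}\in\lh$ linearly independent. Apply the hypothesis of Theorem~\ref{main} to the tuple $(g_{0},\dots,g_{n})$, the point $z_{0}$ and the sequence $z_{s}:=\gamma(t_{s})$: since $\widehat{\Psi(z_{s})}\to\widehat\Phi=\Phi$, we get $|\langle\Phi,\widehat{\Psi(z_{s_{k}})}\rangle|^{2}\to1$ along \emph{every} subsequence, i.e.\ by the reformulation of the first paragraph the quantity in \eqref{inmain} converges to $\|\Phi\|^{2}=\det(\langle g_{i},g_{j}\rangle_{\lh})_{i,j=0}^{n}$, never strictly below it — contradicting \eqref{inmain}. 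Hence $\tilde T_{i\bar j}$ is complete.

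The crux is not the linear algebra of the first paragraph but the infinite-dimensional geometry of the last one: that $\mathbb P\bigl(\bigwedge^{n+1}\lh\bigr)$ is Fubini--Study complete and, above all, that its decomposable locus is closed, so that the limiting jet vector $\Phi$ is again a genuine wedge $g_{0}\wedge\cdots\wedge g_{n}$ that may be fed back into the hypothesis. Equivalently, one may run the argument through the completeness of the Grassmannian $\mathrm{Gr}(n{+}1,\lh)$ of $(n{+}1)$-dimensional subspaces together with the continuity of $z\mapsto\mathrm{span}\{\xi^{z}_{0},\dots,\xi^{z}_{n}\}$; the place where infinite-dimensionality must be controlled is the same. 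One should also record the (routine) Hopf--Rinow-type reduction producing the finite-length curve reaching $\partial\Omega$, since passing from a bounded sequence to a convergent one in the projective space is exactly what fails without it.
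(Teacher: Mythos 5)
Your proposal is correct and follows essentially the same route as the paper: an isometric embedding $z\mapsto[\Psi(z)]$ into the projectivized $(n+1)$-fold exterior power, completeness of that projective Hilbert space, closedness of the decomposable locus via the Pl\"ucker relations, and the resulting contradiction with the strict inequality. The only cosmetic differences are that you work in $\bigwedge^{n+1}\lh$ with the jet vectors of the kernel rather than in the dual formulation $\lh'\wedge\cdots\wedge\lh'$ with evaluation functionals, and that you extract a finite-length escaping curve instead of a non-convergent Cauchy sequence; both are equivalent to the paper's argument.
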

Note that the right hand side of the above expression is the Gramian of the vectors $f_{0},f_{1},\dots,f_{n} $, which is positive, and hence a stronger assumption, which would also imply the completeness, is to require the limit in (\ref{inmain}) to be $0$.

To obtain this, we modify a construction of Lu Qi-Keng (see \cite{MR2447420}), which goes as follows. If $\varphi_{0},\varphi_{1},\dots$ is a orthonormal basis of $\lh$, then one can embed holomorphically the domain $\Omega$ into the 
 infinite dimensional Grassmannian of $n$- dimensional subspaces of $\ell^{2}$, denoted by $\mathbb F(n,\infty)$, by means of
\begin{equation*}\Omega\ni z\to\left[\left.\left(\begin {smallmatrix}
                        \varphi_{0}\frac{\partial\varphi_{1}}{\partial z_{1}}
-\varphi_{1}\frac{\partial\varphi_{0}}{\partial z_{1}} & \varphi_{0}\frac{\partial\varphi_{2}}{\partial z_{1}}-\varphi_{2}\frac{\partial\varphi_{0}}{\partial z_{1}}&\varphi_{1}\frac{\partial\varphi_{2}}{\partial z_{1}}-\varphi_{2}\frac{\partial\varphi_{1}}{\partial z_{1}} & \cdots\\
\vdots&\vdots&\vdots &\cdots\\
  \varphi_{0}\frac{\partial\varphi_{1}}{\partial z_{n}}
-\varphi_{1}\frac{\partial\varphi_{0}}{\partial z_{n}} &
\varphi_{0}\frac{\partial\varphi_{2}}{\partial
z_{n}}-\varphi_{2}\frac{\partial\varphi_{0}}{\partial
z_{n}}&\varphi_{1}\frac{\partial\varphi_{2}}{\partial
z_{n}}-\varphi_{2}\frac{\partial\varphi_{1}}{\partial z_{n}} &
\cdots                   \end {smallmatrix}\right)\right|_{z} \right]\in \mathbb
F(n,\infty),
\end{equation*}
where $[\cdot]$ is the equivalence relation  between $n$- dimensional subspaces of $\ell^{2}$ defining the points in the Grassmannian. This Grassmannian can further be embedded into some projective space by means of the Pl\"ucker embedding and eventually the pullback  of the Fubini-Study metric by the composition of these two embeddings is exactly $ \tilde T_{i\bar j}$ (see \cite{MR2817571}). This approach has some significant disadvantages. The embedding is not independent of the basis, but the main problem is that, because partial derivatives of $L^2$ functions need not be $L^2$, the Grassmannian consists of subspaces of $\ell^2$ and not  $\lh$. Intuitively this is like a pointwise construction which due to the lack of uniformity is not enough to obtain our goals. Our new construction is also far simpler.

 With the help of Theorem \ref{main} we prove.
\begin{theorem}\label{hyperc}
 Bounded hyperconvex domains are complete with respect to $\tilde T_{i\bar j}$.
\end{theorem}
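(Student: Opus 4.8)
The plan is to deduce Theorem~\ref{hyperc} from Theorem~\ref{main} by verifying the stronger sufficient condition noted after it, namely that the left-hand side of~(\ref{inmain}) tends to~$0$. First I would reinterpret~(\ref{inmain}) in Hilbert-space terms. A Schur-complement computation gives the classical identity $K^{n+1}(z)\det\bigl(\tfrac{\partial^{2}}{\partial z_{i}\partial\bar z_{j}}\log K\bigr)(z)=\det M(z)$, where $M(z)$ is the bordered $(n+1)\times(n+1)$ matrix with entries $K$, $\partial K/\partial z_{i}$, $\partial K/\partial\bar z_{j}$, $\partial^{2}K/\partial z_{i}\partial\bar z_{j}$ taken on the diagonal; moreover $M(z)=J_{z}J_{z}^{*}$, where $J_{z}\colon\lh\to\co^{n+1}$ is the bounded $1$-jet evaluation $J_{z}f=\bigl(f(z),\tfrac{\partial f}{\partial z_{1}}(z),\dots,\tfrac{\partial f}{\partial z_{n}}(z)\bigr)$, and the determinant in the numerator of~(\ref{inmain}) is $\det\bigl[\,J_{z}f_{0}\,\big|\,\cdots\,\big|\,J_{z}f_{n}\,\bigr]$. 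Hence the left-hand side of~(\ref{inmain}) at~$z$ equals $\operatorname{Gram}\bigl(P_{R(z)}f_{0},\dots,P_{R(z)}f_{n}\bigr)$, where $R(z):=(\ker J_{z})^{\perp}=\operatorname{ran}J_{z}^{*}\subset\lh$ is the $(n+1)$-dimensional space spanned by the Bergman kernel $K(\cdot,z)$ and its first conjugate derivatives, and $P_{R(z)}$ is the orthogonal projection; in particular $\|P_{R(z)}f\|^{2}=\inf\{\|g\|^{2}_{\lh}\colon g\in\lh,\ J_{z}g=J_{z}f\}$ is the smallest squared $L^{2}$-norm of a holomorphic function carrying the $1$-jet of~$f$ at~$z$ (this is the promised simpler substitute for Lu Qi-Keng's construction: one works directly with the subspaces $R(z)\subset\lh$ rather than with an $\ell^{2}$-Grassmannian). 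Since $\operatorname{Gram}\bigl(P_{R(z)}f_{0},\dots,P_{R(z)}f_{n}\bigr)\le\prod_{i=0}^{n}\|P_{R(z)}f_{i}\|^{2}$, it suffices to prove the following first-jet analogue of the B\l ocki--Pflug--Herbort theorem: \emph{for hyperconvex $\Omega$, every $f\in\lh$, and every sequence $\Omega\ni z_{s}\to z_{0}\in\partial\Omega$, one has $\|P_{R(z_{s})}f\|^{2}\to0$.}

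To prove this I would adapt the $\bar\partial$-technique used for the Bergman metric. Fix $f\in\lh$ and $z\in\Omega$; let $G_{z}:=G_{\Omega}(\cdot,z)$ be the pluricomplex Green function with logarithmic pole at~$z$, and put $\chi:=\psi(G_{z})$, with $\psi\colon\mathbb R\to[0,1]$ smooth, $\psi\equiv1$ on $(-\infty,-2]$ and $\psi\equiv0$ on $[-1,\infty)$, so that $\chi\equiv1$ near~$z$, $\operatorname{supp}\chi\subset U_{z}:=\{G_{z}<-1\}\Subset\Omega$ and $\operatorname{supp}\bar\partial\chi\subset\{-2<G_{z}<-1\}$. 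Then $v:=\chi f$ has $J_{z}v=J_{z}f$. Next I would solve $\bar\partial w=f\,\bar\partial\chi$ with a Donnelly--Fefferman type $L^{2}$-estimate, with respect to the plurisubharmonic weight $\varphi:=2(n+1)G_{z}+2u$ (where $u<0$ is a bounded plurisubharmonic exhaustion, i.e.\ a witness of hyperconvexity) and the self-bounded-gradient auxiliary function $-\log(-G_{z})$. Near~$z$ the weight $e^{-\varphi}$ behaves like $|w-z|^{-2(n+1)}$, which forces the solution $w$ — holomorphic near~$z$, since $\bar\partial\chi$ vanishes there — to vanish to order $\ge2$ at~$z$, i.e.\ $J_{z}w=0$; whereas on $\operatorname{supp}\bar\partial\chi$ the function $G_{z}$ lies in $[-2,-1]$, so $\varphi$, the auxiliary function, and the term $|\bar\partial\chi|^{2}$ are all bounded by constants depending only on $n$ and $\sup|u|$. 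Setting $g:=v-w\in\lh$ one gets $J_{z}g=J_{z}f$ and
\[
\|P_{R(z)}f\|^{2}\le\|g\|^{2}_{\lh}\le 2\|\chi f\|^{2}_{\lh}+2\|w\|^{2}_{\lh}\le C_{n}\int_{U_{z}}|f|^{2}\,dV,
\]
with $C_{n}$ independent of~$z$.

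It then remains to let $z=z_{s}\to z_{0}\in\partial\Omega$: exactly as in the proof that hyperconvex domains are complete with respect to the Bergman metric, hyperconvexity yields $\int_{U_{z_{s}}}|f|^{2}\,dV\to0$ (the regions $U_{z_{s}}$ escape to the boundary), so $\|P_{R(z_{s})}f\|^{2}\to0$; applying this to each of $f_{0},\dots,f_{n}$ shows that the left-hand side of~(\ref{inmain}) tends to~$0$, and Theorem~\ref{main} gives the completeness of $\tilde T_{i\bar j}$. The step I expect to be the main obstacle is the $\bar\partial$-estimate above: one must produce a holomorphic function with the prescribed $1$-jet at~$z$ whose $L^{2}$-norm is controlled by $\int_{U_{z}}|f|^{2}$ and not merely by the $1$-jet data of~$f$ at~$z$, which need not stay bounded as $z\to\partial\Omega$. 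As in Herbort's argument for the Bergman metric, the delicate balancing is to make the logarithmic pole of the weight at~$z$ strong enough to annihilate the full $1$-jet of the correction term~$w$ while keeping the weight — and the contribution of $\bar\partial$ of the cut-off — bounded on $\operatorname{supp}\bar\partial\chi$ by constants uniform in~$z$, and to run the Donnelly--Fefferman estimate so as to absorb that contribution; granting this technical lemma, the completeness of $\tilde T_{i\bar j}$ on hyperconvex domains follows as above.
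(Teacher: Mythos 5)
Your proposal is correct and follows essentially the same route as the paper: the $1$-jet interpolation estimate you sketch via the $\bar\partial$-equation with weight $2(n+1)G_{\Omega}(\cdot,z)$ is precisely the paper's Lemma \ref{estimate} (quoted from Diederich--Herbort), and your identification of the left-hand side of (\ref{inmain}) with the Gram determinant of the minimal-norm jet interpolants is an equivalent repackaging of the paper's use of Theorem \ref{fraction} together with the observation that the numerator depends only on the $1$-jets. The concluding steps --- Hadamard's inequality and the vanishing of $\int_{\{G_{\Omega}(\cdot,z_{s})<-1\}}|f|^{2}\,d\lambda$ as $z_{s}\to\partial\Omega$ in hyperconvex domains --- coincide with the paper's.
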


In particular all pseudoconvex domains with Lipschitz boundaries, which are known to be hyperconvex (see \cite{MR881709}), are complete with respect to $\tilde T_{i\bar j}$.

\end{section}

\begin{section}{Exterior products of Hilbert spaces}
We begin with some basic facts about Hilbert spaces, which are not commonly seen in the theory of Bergman spaces.   Let $V$ be a complex vector space. We define the (algebraic) tensor product vector space $V\otimes V$ as the quotient vector space $^U/_W$ of some vector spaces $U$ and $W$. Here $U$ is the vector space generated by  all pairs $(\alpha,\beta)\in V\times V$ as finite formal linear combinations with complex coefficients and $W$ is the space generated in the same way by all elements of the following types $$(\alpha+\beta,\gamma)-(\alpha,\gamma)-(\beta,\gamma);$$$$ (\alpha,\beta+\gamma)-(\alpha,\beta)-(\alpha,\gamma);$$$$ (a\alpha,\beta)-a(\alpha,\beta);$$$$ (\alpha,a\beta)-a(\alpha,\beta),$$
where $\alpha,\beta,\gamma\in V, a\in\co $. Clearly $W$ is a subspace of $U$. The tensor product $\alpha\otimes \beta$, which is a equivalence class, can be interpreted as the affine space $(\alpha,\beta)+W$. Now the wedge (or exterior) product $V\wedge V$ is defined as the quotient vector space $^{V\otimes V}/_S$, where $S\subset V\otimes V$ is the vector space generated by all elements of the type $\alpha\otimes \alpha$, where $\alpha\in V$. Again $\alpha\wedge\beta$ is a equivalence class, which can be interpreted as the affine space $\alpha\otimes\beta+S\subset V\otimes V$.

Let $H$ be a separable Hilbert space, carrying the inner product $\langle\cdot,\cdot\rangle_{H}$. Now $H\wedge H$ makes sense at least as a vector space. This vector space $H\wedge H$ consists of all finite sums of the type $\sum_{i=1}^{m}a_{i}\alpha_{i}\wedge\beta_{i}$, where $a_{i}\in\co, \alpha_{i},\beta_{i}\in H,m\in \mathbb N$. We endow this space with a inner product defined as follows. For elements of the type $\alpha\wedge\beta$ and $\gamma\wedge\delta$, where $\alpha,\beta,\gamma,\delta\in H$

\begin{equation}\label{inner2}\langle \alpha\wedge\beta,\gamma\wedge\delta \rangle_{H\wedge H}:=\det\begin{pmatrix} \langle\alpha,\gamma\rangle_{H}&\langle\alpha,\delta\rangle_{H}\\
   \langle\beta,\gamma\rangle_{H}&\langle\beta,\delta\rangle_{H}                                                                                        \end{pmatrix}.\end{equation}
After defining the inner product on such vectors we extend it on the whole vector space $H\wedge H$ by linearity. Now we perform the completion of $H\wedge H$ with respect to $\langle \cdot,\cdot\rangle_{H\wedge H}$, that is we allow not only finite but also countable combinations  $\sum_{i=1}^{\infty}a_{i}\alpha_{i}\wedge\beta_{i}$, obeying the natural restriction that $\sum_{i=1}^{\infty}|a_{i}|^2<\infty$. By abusing notation, we agree to call this completion also $H\wedge H$. The inner product also extends to the completed vector space and again by abusing notation we call the extension $\langle\cdot,\cdot\rangle_{H\wedge H}$. Now it is easy to see that $(H\wedge H, \langle \cdot,\cdot\rangle_{H\wedge H})$ is a Hilbert space. It is also easy to see that this Hilbert space is separable.

Likewise if we take $n+1$ copies of a Hilbert space $F$, we can define the Hilbert space $(F\wedge\cdots\wedge F, \langle \cdot,\cdot\rangle_{F\wedge\cdots\wedge F})$, which is the completion of the vector space  $F\wedge\cdots\wedge F$ with respect to the inner product, which is the linear extension of

\begin{equation}\label{innern}\langle \alpha_{0}\wedge\cdots\wedge\alpha_{n},\beta_{0}\wedge\cdots\wedge\beta_{n}\rangle_{F\wedge\cdots\wedge F}:=\det\begin{pmatrix} \langle\alpha_{0},\beta_{0}\rangle_{F}&\cdots&\langle\alpha_{0},\beta_{n}\rangle_{F}\\
 \vdots&\ddots&\vdots\\
  \langle\alpha_{n},\beta_{0}\rangle_{F}&\cdots&\langle\alpha_{n},\beta_{n}\rangle_{F} \end{pmatrix}.    \end{equation}
It is a matter of algebraic manipulations to see  that the continuous dual space of $F\wedge\cdots\wedge F$ satisfies
\begin{equation}\label{dual}\left(F\wedge\cdots\wedge F\right)'=F'\wedge\cdots\wedge F'.\end{equation}
A proof of this fact can be found in \cite{MR633754}.

A element $\alpha\in F\wedge\cdots\wedge F$ which can be represented as $\alpha=\alpha_{0}\wedge_{ }\alpha_{1}\wedge_{}\cdots\wedge_{}\alpha_{n}$, for some $ \alpha_{i}\in F, i=0,\dots,n$ will be called decomposable (the terms pure, monomial, simple and completely reducible are also frequent in the literature). Clearly not all elements of $F\wedge\cdots\wedge F$ are decomposable.
 There is a criterion for determining whether a non-zero vector is decomposable or not, known as Pl\"ucker (or Pl\" ucker-Grassmann) conditions. To introduce it we need more notation. Let $J$ be a $s$- tuple of natural numbers $j_{1}<\dots< j_{s}$. We denote by $e_{J}$ the vector $e_{j_{1}}\wedge\dots\wedge e_{j_{s}}$, where $e_{j}$ is a fixed orthonormal basis of a separable Hilbert space $E$. Clearly $e_{J}\in E\wedge\dots\wedge E$, where the exterior product is taken $s$ times, and moreover the vectors $e_{J}$, for all possible $s$-tuples $J$ of distinct natural numbers, form a orthonormal basis of $E\wedge\dots\wedge E$. We can therefore expand a vector $\alpha \in E\wedge\dots\wedge E$ as $\alpha=\sum_{J}a_{J}e_{J}$, where $a_{J}=\langle \alpha,e_{J}\rangle_{E\wedge\dots\wedge E}\in\co$. Now a non-zero vector $\alpha$ is decomposable if and only if for all $I\subset \mathbb N^{s-1}$ and for all $L\subset \mathbb N^{s+1}$, both $J$ and $L$ without recurring elements, such that $I\cap L=\emptyset$, the following equality holds

\begin{equation}\label{plugra} \sum _{i\in L}\rho_{J,L,i}a_{I\cup \{i\}}a_{L\setminus \{i\}}=0, 
\end{equation}
where $\rho_{J,L,i}=1$ if $\sharp \{ j\in L: j< i\}\equiv \sharp \{ j\in I: j< i\}  (\mod 2)$ and $\rho_{J,L,i}=-1$ otherwise. Also in the index notation $a_{I\cup \{i\}}$ (respectively $a_{L\setminus \{i\}}$) it should be clarified that the elements of the sets $I\cup \{i\}$ (respectively $L\setminus \{i\}$) are ordered in a increasing fashion. For a proof see \cite{GALLIER}, Chapter 22. Actually in \cite{GALLIER} only the finite-dimensional case is considered, however, one should take the continuous dual space instead of the algebraic dual space and the argument goes mutatis-mutandis.

\begin{lemma}\label{sequence} If a sequence $\lbrace \alpha_{i}\rbrace_{i=1}^{\infty}$ of unit vectors in $F\wedge\cdots\wedge F$ has a limit $\alpha\in F\wedge\cdots\wedge F$ in the norm topology and moreover each $\alpha_{i}$ is of the form $b_{i}\alpha_{i0}\wedge_{ }\alpha_{i1}\wedge\cdots\wedge\alpha_{in}$, where $b_{i}\in\co,\alpha_{ij} \in\lh', j=0,\dots,n, i=1,\dots$, then also $\alpha$ is a unit vector of the form
$b\alpha_{0}\wedge_{ }\alpha_{1}\wedge_{}\cdots\wedge\alpha_{n}$, for some $b\in\co,\alpha_{j} \in\lh',j=0,\dots,n$ (that is the limit is a decomposable vector). 
\end{lemma}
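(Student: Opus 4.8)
The plan is to rely on the Pl\"ucker--Grassmann characterization of decomposability recalled above, together with the simple observations that the relations (\ref{plugra}) are quadratic, hence continuous, in the coordinates of a vector with respect to an orthonormal basis, and that each individual relation involves only finitely many of those coordinates. First I would note that, the norm being continuous, the hypotheses $\|\alpha_{i}\|=1$ and $\alpha_{i}\to\alpha$ force $\|\alpha\|=1$; in particular $\alpha\neq 0$, which is exactly what is needed to invoke the criterion. Write $F=\lh'$, which is separable since $\lh$ is a closed subspace of the separable space $L^{2}(\Omega)$, and fix an orthonormal basis $\lbrace e_{j}\rbrace_{j}$ of $F$; then $\lbrace e_{J}\rbrace_{J}$, with $J$ running over the strictly increasing $(n+1)$-tuples of natural numbers, is an orthonormal basis of $F\wedge\cdots\wedge F$.

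Next I would expand $\alpha_{i}=\sum_{J}a^{(i)}_{J}e_{J}$ and $\alpha=\sum_{J}a_{J}e_{J}$, with $a^{(i)}_{J}=\langle\alpha_{i},e_{J}\rangle$ and $a_{J}=\langle\alpha,e_{J}\rangle$. The Cauchy--Schwarz inequality gives $|a^{(i)}_{J}-a_{J}|\leq\|\alpha_{i}-\alpha\|$, so the coordinates converge: $a^{(i)}_{J}\to a_{J}$ for each fixed $J$. Each $\alpha_{i}$ is a non-zero decomposable vector (it is a unit vector of the form $b_{i}\alpha_{i0}\wedge\cdots\wedge\alpha_{in}$ with $\alpha_{ij}\in F$, absorbing $b_{i}$ into the first factor), hence its coordinates satisfy all the relations (\ref{plugra}). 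Fixing one such relation: for an $(n+1)$-fold exterior product its left-hand side is a sum of $n+2$ terms, each a product of two of the $a^{(i)}_{J}$; letting $i\to\infty$ and using the coordinatewise convergence, the very same relation holds for the coordinates $\lbrace a_{J}\rbrace_{J}$ of $\alpha$. Since this holds for every admissible choice of index tuples, $\alpha$ satisfies all of (\ref{plugra}); being non-zero, it is therefore decomposable, i.e. $\alpha=\alpha_{0}\wedge\cdots\wedge\alpha_{n}$ (in particular of the form $b\,\alpha_{0}\wedge\cdots\wedge\alpha_{n}$) with $\alpha_{j}\in\lh'$ and $b\in\co$. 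Together with $\|\alpha\|=1$ this is the assertion.

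The main point requiring care is not in the limiting argument itself---which is entirely elementary once set up as above---but in the legitimacy of applying the Pl\"ucker--Grassmann criterion to the infinite-dimensional Hilbert space $F\wedge\cdots\wedge F$; as indicated earlier, this follows from the classical finite-dimensional statement (\cite{GALLIER}, Chapter~22) by passing to the continuous dual in place of the algebraic one. It is worth emphasising that the whole proof hinges on the fact that each relation in (\ref{plugra}) is finitely supported in the coordinates: norm convergence is a priori only control of the $\ell^{2}$-sum of all coordinates, yet it already suffices to pass each such relation to the limit.
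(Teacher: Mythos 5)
Your proposal is correct and follows essentially the same route as the paper: expand in the orthonormal basis $\lbrace e_{J}\rbrace$, observe that norm convergence gives coordinatewise convergence, and pass each (finitely supported, quadratic) Pl\"ucker--Grassmann relation (\ref{plugra}) to the limit, concluding that the non-zero limit $\alpha$ is decomposable. Your version merely spells out a few points the paper leaves implicit (the Cauchy--Schwarz bound $|a_{J}^{i}-a_{J}|\leq\Vert\alpha_{i}-\alpha\Vert$, the preservation of the unit norm, and the finite support of each relation).
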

First observe that it is not true in general that if a sequence $f_{s}\wedge g_{s}$, $f_{s},g_{s}\in E$, for some Hilbert space $E$, has a limit in $E\wedge E$ then necessarily $f_{s}$ and $g_{s}$ both have limits in $E$ and the simplest counterexample is just $f_{s}=sf$, $g_{s}=\frac{1}{s}g$ , for some fixed $ f,g\in E$. 
\begin{proof} We expand the sequence elements, as well as the limit, into
$$\alpha_{i}=\sum_{J}a_{J}^{i}e_{J}, \alpha= \sum_{J}a_{J}e_{J}.$$
Since $\Vert \alpha_{i}-\alpha\Vert_{F\wedge\cdots\wedge F}\to 0$, it follows that $|a_{J}^{i}-a_{J}|\to 0$.  By the assumption and the Pl\"ucker relations (\ref{plugra}) we have
$$\sum _{i\in L}\rho_{J,L,i}a_{I\cup \{i\}}^{i}a_{L\setminus \{i\}}^{i}=0,$$
for all subsets $J\subset\mathbb N^{n},L\subset\mathbb N^{n+2}$, without repetitions, such that $J\cap L=\emptyset$. Now it is obvious that also
$$\sum _{i\in L}\rho_{J,L,i}a_{I\cup \{i\}}a_{L\setminus \{i\}}=0.$$

\end{proof}

For more on these items one should consult \cite{MR633754}, Chapter $5, \S 3,4$, where tensor and exterior products of Hilbert spaces are explicitly considered, \cite{MR0274237}, Chapter $3$, for more results but in a more abstract algebraic setting and also \cite{GALLIER}, Chapter $22$, where the concepts of decomposable vectors and tests for decomposability are very clearly presented, however, only in finite dimensions.
\begin{section}{the construction}
In our case $F$ will be $\lh'$ - the Hilbert space which is the continuous dual space of $\lh$ and so $F\wedge\cdots\wedge F=\lh'\wedge\cdots\wedge\lh'$ can be identified with the Hilbert space of multilinear antisymmetric continuous mappings (forms) from $\lh\times\cdots\times\lh$ to $\co$. Actually the forms are defined on $\lh\wedge\cdots\wedge\lh$ rather than on $\lh\times\cdots\times\lh$ but the definition can be extended in a obvious and canonical way.  A element $\alpha$ of the Hilbert space $F\wedge\cdots\wedge F$ can be written down as a linear combination of the form
 $$\alpha=\sum_{i=1}^{\infty} a_{i}\alpha_{i0}\wedge_{ }\alpha_{i1}\wedge_{}\cdots\wedge_{}\alpha_{in},$$
where $\{a_{i}\}_{i=1}^{\infty}\in\ell^{2}, \alpha_{ij}\in \lh',j=0,\dots,n,i=1,\dots$. The aforementioned identification with a multilinear antisymmetric form is realized by first identifying elements of the type $\alpha_{i0}\wedge_{ }\alpha_{i1}\wedge_{}\cdots\wedge_{}\alpha_{in}$ by
$$\alpha_{i0}\wedge_{ }\alpha_{i1}\wedge_{}\cdots\wedge_{}\alpha_{in}\cong$$
\begin{equation}\label{multilinear}
\lh\times\cdots\times\lh\ni(f_{0},\dots,f_{n})\to \det\begin{pmatrix} \alpha_{i0}(f_{0})&\cdots&\alpha_{i0}(f_{n})\\
 \vdots&\ddots&\vdots\\
  \alpha_{in}(f_{0})&\cdots&\alpha_{in}(f_{n}) \end{pmatrix} \end{equation}

$$= \alpha_{i0}\wedge_{ }\alpha_{i1}\wedge_{}\cdots\wedge_{}\alpha_{in}(f_{0},\dots,f_{n})\in\co$$
and extending it linearly on the whole $\lh'\wedge\cdots\wedge\lh'$ afterwards. This is consistent with the introduced inner product and hence the correspondence is clearly a isomorphism of Hilbert spaces.

By the Cauchy estimates the following linear mappings are continuous

\begin{equation}\label{irepres}i(z):\lh\ni f\to f(z)\in\co,\end{equation}
\begin{equation}\label{j1repres}j_{1}(z):\lh\ni f\to \frac{\partial f}{\partial z_{1}}(z)\in\co,\end{equation}
$$\dots$$
\begin{equation}\label{jnrepres}j_{n}(z):\lh\ni f\to \frac{\partial f}{\partial z_{n}}(z)\in\co.\end{equation}
By the Riesz theorem for every $l\in\lh'$ there is a unique $l'\in \lh$ such that $ l(\cdot)=\langle \cdot, l' \rangle_{\lh}$. Moreover, $ \langle k, l \rangle_{\lh'}=\overline{\langle k', l' \rangle}_{\lh}=\langle    l',k' \rangle_{\lh}$.
In our case one can easily check by using the reproducing property of the Bergman kernel that
\begin{equation}\label{irep}i(z)'=K(\cdot,z)\in\lh,\end{equation}
\begin{equation}\label{jsrep}j_{s}(z)'=\left.\frac{\partial K(\cdot,\zeta)}{\partial \bar \zeta_{s}}\right|_{\zeta=z}\in\lh, s=1,\dots,n.\end{equation}

Let $\mathbb P(F\wedge\cdots\wedge F)$ be the projectivization of the Hilbert space $F\wedge\cdots\wedge F$, that is the quotient space $^{F\wedge\cdots\wedge F}/_{\sim}$ with respect to the following  (projective) equivalence relation. For $u,v\in 
F\wedge\cdots\wedge F$ we have $u\sim v$ if and only if $u=cv$, for some $c\in\co\setminus\{0\}$. For more on projectivizations of infinite dimensional Hilbert spaces see \cite{MR0112162}.
We embed  $\Omega$ into the projective space $\mathbb P(F\wedge\cdots\wedge F)$
by the mapping

\begin{equation}\label{embed}\Omega \ni z\to[i(z)\wedge j_{1}(z)\wedge j_{2}(z)\wedge \cdots\wedge  j_{n}(z)]\in \mathbb P(F\wedge\cdots\wedge F),\end{equation}
where $[.]$ is the equivalence class with respect to $\sim$. This is a holomorphic embedding. 
\end{section}

\begin{lemma}\label{value} The value of $i(z)\wedge j_{1}(z)\wedge j_{2}(z)\wedge \cdots\wedge  j_{n}(z)$, interpreted as an antisymmetric  multilinear form on $\lh\times\cdots\times\lh$, at the point $(f_{0},f_{1},\dots,f_{n})\in \lh\times\dots\times \lh$ is
 $$\det\begin{pmatrix}
                                                     f_{0}(z)&\dots &f_{n}(z)\\
\frac{ \partial f_{0}}{\partial z_{1}}(z)&\dots & \frac{ \partial f_{n}}{\partial z_{1}}(z)\\
\vdots&\ddots &\vdots\\
\frac{ \partial f_{0}}{\partial z_{n}}(z)&\dots &\frac{ \partial f_{n}}{\partial z_{n}}(z)\\
                                                    \end{pmatrix}.$$
\end{lemma}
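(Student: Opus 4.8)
The plan is simply to unwind the definitions. The key observation is that $i(z)\wedge j_{1}(z)\wedge\cdots\wedge j_{n}(z)$ is, by construction, a \emph{decomposable} element of $F\wedge\cdots\wedge F=\lh'\wedge\cdots\wedge\lh'$: indeed, by the Cauchy estimates the functionals $i(z)$ and $j_{1}(z),\dots,j_{n}(z)$ defined in (\ref{irepres})--(\ref{jnrepres}) all belong to $\lh'$, so their wedge is exactly of the type $\alpha_{0}\wedge\alpha_{1}\wedge\cdots\wedge\alpha_{n}$ with $\alpha_{k}\in\lh'$ that is covered by the identification (\ref{multilinear}).

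First I would apply (\ref{multilinear}) with $\alpha_{0}:=i(z)$ and $\alpha_{s}:=j_{s}(z)$ for $s=1,\dots,n$. It states precisely that, viewed as an antisymmetric multilinear form on $\lh\times\cdots\times\lh$, the value of $i(z)\wedge j_{1}(z)\wedge\cdots\wedge j_{n}(z)$ at $(f_{0},\dots,f_{n})$ equals
$$\det\begin{pmatrix} i(z)(f_{0})&\cdots&i(z)(f_{n})\\ j_{1}(z)(f_{0})&\cdots&j_{1}(z)(f_{n})\\ \vdots&\ddots&\vdots\\ j_{n}(z)(f_{0})&\cdots&j_{n}(z)(f_{n}) \end{pmatrix}.$$
Then I would substitute the defining formulas $i(z)(f)=f(z)$ and $j_{s}(z)(f)=\dfrac{\partial f}{\partial z_{s}}(z)$ from (\ref{irepres})--(\ref{jnrepres}), row by row: the top row becomes $\bigl(f_{0}(z),\dots,f_{n}(z)\bigr)$ and, for $s=1,\dots,n$, the $s$-th row becomes $\bigl(\tfrac{\partial f_{0}}{\partial z_{s}}(z),\dots,\tfrac{\partial f_{n}}{\partial z_{s}}(z)\bigr)$. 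This is exactly the matrix displayed in the statement, which finishes the argument.

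There is essentially no obstacle here; the only (very mild) point deserving a sentence of care is the legitimacy of (\ref{multilinear}) itself, namely that this rule on decomposable vectors extends consistently and defines the claimed isometric isomorphism between $F\wedge\cdots\wedge F$ and the Hilbert space of continuous antisymmetric multilinear forms on $\lh$ — this was already established in the construction section — and that the determinant on the right-hand side already encodes the antisymmetry, so the resulting object is indeed the canonical extension of the form from $\lh\wedge\cdots\wedge\lh$ to $\lh\times\cdots\times\lh$. One may also note that if $i(z),j_{1}(z),\dots,j_{n}(z)$ happened to be linearly dependent as functionals, both sides would vanish identically, so the identity is trivially consistent even in that degenerate case, although for a bounded domain this situation does not arise.
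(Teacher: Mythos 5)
Your proposal is correct and follows exactly the same route as the paper, which simply cites the identification (\ref{multilinear}) together with the definitions (\ref{irepres})--(\ref{jnrepres}); you have merely written out the substitution explicitly. Nothing further is needed.
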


 The proof is a immediate consequence of (\ref{multilinear}), (\ref{irepres}), (\ref{j1repres}) and (\ref{jnrepres}).

\begin{lemma}\label{norm} The square of the norm of $i(z)\wedge j_{1}(z)\wedge \cdots\wedge j_{n}(z)$ in $F\wedge\cdots\wedge F$ equals

$$\left\Vert i(z)\wedge j_{1}(z)\wedge \cdots\wedge  j_{n}(z)\right\Vert_{F\wedge\cdots\wedge F}^{2}=\left. K^{n+1}\det\left(\frac{\partial^{2}}{\partial z_{i}\partial\bar z_{j}} \log K\right)\right|_{z}.$$
 
\end{lemma}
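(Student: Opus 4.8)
The plan is to compute the inner product
$\langle i(z)\wedge j_1(z)\wedge\cdots\wedge j_n(z),\, i(z)\wedge j_1(z)\wedge\cdots\wedge j_n(z)\rangle_{F\wedge\cdots\wedge F}$
directly from the defining formula (\ref{innern}). By that formula the norm squared equals the Gram determinant
\[
\det\begin{pmatrix}
\langle i(z),i(z)\rangle_{F} & \langle i(z),j_1(z)\rangle_{F} & \cdots & \langle i(z),j_n(z)\rangle_{F}\\
\langle j_1(z),i(z)\rangle_{F} & \langle j_1(z),j_1(z)\rangle_{F} & \cdots & \langle j_1(z),j_n(z)\rangle_{F}\\
\vdots & \vdots & \ddots & \vdots\\
\langle j_n(z),i(z)\rangle_{F} & \langle j_n(z),j_1(z)\rangle_{F} & \cdots & \langle j_n(z),j_n(z)\rangle_{F}
\end{pmatrix},
\]
where $F=\lh'$. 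So the first step is to evaluate each entry of this $(n+1)\times(n+1)$ matrix using the Riesz representatives (\ref{irep}) and (\ref{jsrep}) together with the identity $\langle k,l\rangle_{\lh'}=\langle l',k'\rangle_{\lh}$ from the excerpt. Concretely, $\langle i(z),i(z)\rangle_{F}=\langle K(\cdot,z),K(\cdot,z)\rangle_{\lh}=K(z,z)$ by the reproducing property; $\langle i(z),j_s(z)\rangle_{F}=\langle j_s(z)',i(z)'\rangle_{\lh}=\langle \partial_{\bar\zeta_s}K(\cdot,\zeta)|_{\zeta=z},K(\cdot,z)\rangle_{\lh}=\partial_{\bar z_s}K(z,z)$ again by reproducing the conjugated-kernel function; and similarly $\langle j_s(z),i(z)\rangle_{F}=\partial_{z_s}K(z,z)$ and $\langle j_s(z),j_t(z)\rangle_{F}=\partial_{z_t}\partial_{\bar z_s}K(z,z)$. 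Thus the Gram matrix is exactly the ``bordered'' matrix built from $K$ and its first and mixed second derivatives, i.e. the Hessian of $K$ bordered by the first derivatives and $K$ itself.

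The second step is the purely linear-algebraic identity relating this bordered determinant to $K^{n+1}\det\!\big(\partial_i\partial_{\bar j}\log K\big)$. Writing $K=K(z,z)$, $K_i=\partial_{z_i}K$, $K_{\bar j}=\partial_{\bar z_j}K$, $K_{i\bar j}=\partial_{z_i}\partial_{\bar z_j}K$, one has
\[
\frac{\partial^2}{\partial z_i\partial\bar z_j}\log K=\frac{K_{i\bar j}}{K}-\frac{K_i K_{\bar j}}{K^2},
\]
so that $K^{n+1}\det(\partial_i\partial_{\bar j}\log K)=K^{n+1}\det\!\big(\tfrac{1}{K}K_{i\bar j}-\tfrac{1}{K^2}K_iK_{\bar j}\big)=K\det\!\big(K_{i\bar j}-\tfrac{1}{K}K_iK_{\bar j}\big)$. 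On the other hand, performing row/column operations on the bordered $(n+1)\times(n+1)$ matrix — subtract $K_i/K$ times the first row from the $(i{+}1)$-st row for each $i$, then expand (the first column becomes $(K,0,\dots,0)^T$) — turns the bordered determinant into $K\cdot\det\!\big(K_{i\bar j}-\tfrac{1}{K}K_iK_{\bar j}\big)$. Matching the two expressions gives the claim. This is the standard Schur-complement computation; I would present it in two lines rather than spelling out every entry.

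I do not expect a genuine obstacle here: both steps are routine. The only point requiring a modicum of care is the first one — making sure the Riesz/reproducing-kernel bookkeeping places the holomorphic derivative in the $z_s$-slot and the antiholomorphic derivative in the $\bar z_t$-slot consistently with the convention in (\ref{bmdef}), and verifying that the conjugation in $\langle k,l\rangle_{\lh'}=\langle l',k'\rangle_{\lh}$ is handled correctly so the off-diagonal border entries come out as $K_i$ and $K_{\bar j}$ and not their conjugates. Once the Gram matrix is correctly identified, the determinant identity is forced. An entirely equivalent route, which I might mention as an alternative, is to use the basis-expansion picture: fix an orthonormal basis $\{\varphi_\nu\}$ of $\lh$, note that $i(z)\wedge j_1(z)\wedge\cdots\wedge j_n(z)=\sum_{J}c_J\,e_J$ with $c_J$ the $(n{+}1)\times(n{+}1)$ minors of the infinite matrix whose rows are $(\varphi_\nu(z))_\nu$, $(\partial_{z_1}\varphi_\nu(z))_\nu$, \dots, $(\partial_{z_n}\varphi_\nu(z))_\nu$, and then apply the Cauchy–Binet formula to recognize $\sum_J|c_J|^2$ as the Gram determinant above; this reproduces the same $K^{n+1}\det(\partial_i\partial_{\bar j}\log K)$ after the same Schur-complement step, and also connects directly with Lu Qi-Keng's construction cited in the introduction.
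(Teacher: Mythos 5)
Your proposal is correct and follows essentially the same route as the paper: express the squared norm as the Gram determinant via (\ref{innern}), identify the entries through the Riesz representatives (\ref{irep}), (\ref{jsrep}) and the reproducing property to obtain the bordered matrix of $K$ and its derivatives, and then pass to $K^{n+1}\det\left(\partial^{2}\log K/\partial z_{i}\partial\bar z_{j}\right)$. The only difference is that you carry out the final Schur-complement/row-reduction identity explicitly (correctly), whereas the paper simply cites it as a well-known formula.
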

\begin{proof}
 By (\ref{innern}), the Riesz theorem, (\ref{irep}), (\ref{jsrep}) and the reproducing property of the Bergman kernel we have
$$\left\Vert i(z)\wedge j_{1}(z)\wedge \cdots\wedge j_{n}(z)\right\Vert_{F\wedge\cdots\wedge F}^{2}$$
$$=\Big\langle i(z)\wedge j_{1}(z)\wedge \cdots\wedge  j_{n}(z), i(z)\wedge j_{1}(z)\wedge \cdots\wedge  j_{n}(z)\Big\rangle_{F\wedge\cdots\wedge F}$$
$$=\det\begin{pmatrix}
        \langle i(z), i(z)\rangle_{\lh'} & \langle i(z), j_{1}(z)\rangle_{\lh'}&\dots &\langle i(z),  j_{n}(z)\rangle_{\lh'}\\
\langle j_{1}(z), i(z)\rangle_{\lh'} & \langle j_{1}(z), j_{1}(z)\rangle_{\lh'}&\dots &\langle j_{1}(z),  j_{n}(z)\rangle_{\lh'}\\
\vdots &\vdots&\ddots & \vdots \\
\langle  j_{n}(z), i(z)\rangle_{\lh'} &\langle  j_{n}(z), j_{1}(z)\rangle_{\lh'}&\dots &\langle j_{n}(z),  j_{n}(z)\rangle_{\lh'}
       \end{pmatrix}
$$

$$=\left.\det\begin{pmatrix}
        \langle K, K\rangle_{\lh} & \langle   \frac{\partial K}{\partial \bar \zeta_{1}},K\rangle_{\lh}&\dots &\langle  \frac{\partial K}{\partial \bar \zeta_{n}},K\rangle_{\lh}\\
\langle K,\frac{\partial K}{\partial \bar \zeta_{1}}\rangle_{\lh} & \langle \frac{\partial K}{\partial \bar \zeta_{1}}, \frac{\partial K}{\partial \bar \zeta_{1}}\rangle_{\lh}&\dots &\langle  \frac{\partial K}{\partial \bar \zeta_{n}},\frac{\partial K}{\partial \bar \zeta_{1}}\rangle_{\lh}\\
\vdots &\vdots&\ddots & \vdots \\
\langle K, \frac{\partial K}{\partial \bar \zeta_{n}}\rangle_{\lh} &\langle    \frac{\partial K}{\partial \bar \zeta_{1}},\frac{\partial K}{\partial \bar \zeta_{n}}\rangle_{\lh}&\dots &\langle \frac{\partial K}{\partial \bar \zeta_{n}},  \frac{\partial K}{\partial \bar \zeta_{n}}\rangle_{\lh}
       \end{pmatrix}\right|_{z}
$$

$$=\left.\det\begin{pmatrix}
        K &   \frac{\partial K}{\partial \bar \zeta_{1}}&\dots & \frac{\partial K}{\partial  \bar\zeta_{n}}\\
 \frac{\partial K}{\partial  \zeta_{1}} &  \frac{\partial^2 K}{\partial \zeta_{1}\partial \bar \zeta_{1}}, &\dots &\frac{\partial^2 K}{\partial \zeta_{1}\partial \bar \zeta_{n}}\\
\vdots &\vdots&\ddots & \vdots \\
 \frac{\partial K}{\partial  \zeta_{n}} & \frac{\partial^2 K}{\partial \zeta_{n}\partial \bar \zeta_{1}}, &\dots &   \frac{\partial^2 K}{\partial  \zeta_{n}\partial\bar \zeta_{n}}
       \end{pmatrix}\right|_{z}.$$
By a well known formula (see e.g., \cite{MR1398092}) the last expression equals  

$$\left. K^{n+1}\det\left(\frac{\partial^{2}}{\partial z_{i}\partial\bar z_{j}} \log K\right)\right|_{z}.$$
 \end{proof}

\begin{theorem}\label{isometry}
 The embedding (\ref{embed}) is isometric, that is, the pullback of the Fubini-Study metric on $\mathbb P(F\wedge\cdots\wedge F)$ is exactly the metric $\tilde T_{i\bar j}$.  
\end{theorem}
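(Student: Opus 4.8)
The plan is to read off the pullback metric from the standard description of the Fubini--Study metric on the projectivization of a Hilbert space $H$ by means of a local K\"ahler potential: if $U\subset\co^{n}$ is open and $G\colon U\to H\setminus\{0\}$ is holomorphic, then the pullback under $z\mapsto[G(z)]\in\mathbb P(H)$ of the Fubini--Study metric is the K\"ahler metric with local potential $\log\Vert G(z)\Vert_{H}^{2}$, i.e. with metric tensor $\frac{\partial^{2}}{\partial z_{i}\partial\bar z_{j}}\log\Vert G(z)\Vert_{H}^{2}$ at $z$. This is well posed on $\mathbb P(H)$ because replacing $G$ by $cG$ with $c$ a nowhere vanishing holomorphic function changes the potential by the pluriharmonic function $\log|c|^{2}$, which leaves the complex Hessian untouched (see \cite{MR0112162} for the infinite dimensional setting). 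In our case $H=F\wedge\cdots\wedge F$ and $G(z)=i(z)\wedge j_{1}(z)\wedge\cdots\wedge j_{n}(z)$, the lift used in the embedding (\ref{embed}).

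First I would check that $G$ is a genuine zero free holomorphic $H$-valued map. It never vanishes by Lemma \ref{norm}: $\Vert G(z)\Vert_{H}^{2}=K(z,z)^{n+1}\det\big(T_{p\bar q}(z)\big)>0$, since the Bergman kernel on the diagonal is positive and $T_{p\bar q}$ is positive definite. For holomorphy one argues weakly: for each fixed $(f_{0},\dots,f_{n})\in\lh\times\cdots\times\lh$, Lemma \ref{value} identifies the value of the functional $G(z)$ on $(f_{0},\dots,f_{n})$ with the determinant of a matrix whose entries are $f_{k}(z)$ and $\frac{\partial f_{k}}{\partial z_{s}}(z)$, hence holomorphic in $z$. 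Such decomposable functionals span a dense subspace of $(F\wedge\cdots\wedge F)'=F'\wedge\cdots\wedge F'$ by (\ref{dual}), and $G$ is locally bounded, again by Lemma \ref{norm}; approximating an arbitrary $\psi\in H'$ in norm by elements of that dense span shows $\psi\circ G$ is holomorphic for every $\psi$, and the standard weak-equals-strong criterion for vector valued holomorphic functions then upgrades $G$ to a holomorphic map in the norm of $H$.

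Granting these two points the conclusion is a one line substitution. By Lemma \ref{norm},
$$\log\Vert G(z)\Vert_{F\wedge\cdots\wedge F}^{2}=(n+1)\log K(z,z)+\log\det\big(T_{p\bar q}(z)_{p,q=1,\dots,n}\big),$$
so applying $\frac{\partial^{2}}{\partial z_{i}\partial\bar z_{j}}$ and recalling (\ref{bmdef}) and (\ref{riccimetric}),
$$\frac{\partial^{2}}{\partial z_{i}\partial\bar z_{j}}\log\Vert G(z)\Vert_{H}^{2}=(n+1)T_{i\bar j}(z)+\frac{\partial^{2}}{\partial z_{i}\partial\bar z_{j}}\log\det\big(T_{p\bar q}(z)\big)=\tilde T_{i\bar j}(z).$$
Combined with the potential description of the Fubini--Study metric recalled above, this says exactly that the pullback of the Fubini--Study metric under (\ref{embed}) is $\tilde T_{i\bar j}$.

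The genuine work, and the place I expect to spend the most care, is entirely in the second paragraph together with the preliminary normalization: making the ``potential $=\log\Vert\cdot\Vert^{2}$'' description of the Fubini--Study metric precise on an infinite dimensional $\mathbb P(H)$, and passing from the evident weak holomorphy of $G$ (one functional at a time, via Lemma \ref{value}) to holomorphy in the norm of $F\wedge\cdots\wedge F$. Once that is in place, the identity with $\tilde T_{i\bar j}$ is immediate from Lemma \ref{norm} and the definition (\ref{riccimetric}).
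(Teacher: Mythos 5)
Your proposal is correct and follows essentially the same route as the paper: compute the pullback via the local K\"ahler potential $\log\Vert\cdot\Vert^{2}$ of the Fubini--Study metric and substitute the formula from Lemma \ref{norm}, whereupon (\ref{bmdef}) and (\ref{riccimetric}) give $\tilde T_{i\bar j}$ immediately. The additional care you take with the zero-freeness and the weak-to-strong holomorphy of the lift $G$ is a reasonable filling-in of details that the paper leaves implicit, not a different argument.
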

\begin{proof}
 First recall that the Fubini-Study metric on a projectivization $\mathbb P(E)$ of a Hilbert space $E$ at the point $[\zeta]\in \mathbb P(E)$ has the following metric tensor
$$\mathcal FS_{p\bar q}:=\frac{\partial^{2}}{\partial \zeta_{p}\partial \bar \zeta_{q}}\log \Vert \zeta\Vert_{E}^{2}.$$
Note that the definition does not depend ot the choice of the (non-zero) representative $\zeta \in [\zeta]$. Let the image of the point $z\in\Omega$ be $[i(z)\wedge j_{1}(z)\wedge j_{2}(z)\wedge \cdots\wedge  j_{n}(z)]=[\zeta]$. By Lemma (\ref{norm}) the pullback of the Fubini-Study metric is the metric with metric tensor

$$[i(\cdot)\wedge j_{1}(\cdot)\wedge j_{2}(\cdot)\wedge \cdots\wedge  j_{n}(\cdot)]^{\ast}(\mathcal FS_{p\bar q})$$$$=\frac{\partial^{2}}{\partial z_{i}\partial \bar z_{j}}\log\left(\left. K^{n+1}\det\left(\frac{\partial^{2}}{\partial z_{r}\partial\bar z_{s}} \log K\right)\right|_{z}\right)$$
$$=(n+1)T_{i\bar j}(z)+\frac{\partial^{2}}{\partial z_{i}\partial \bar z_{j}}\log \Big(\det T_{r\bar s}(z)_{r,s=1,\dots,n}\Big)=\tilde T_{i\bar j}$$
\end{proof}
For more on the Fubini-Study metric on projectivizations of Hilbert spaces and related items see \cite{MR0112162}.
\begin{theorem}\label{fraction}
 The following equality holds

$$\left. K^{n+1}\det\left(\frac{\partial^{2}}{\partial z_{i}\partial\bar z_{j}} \log K\right)\right|_{z}$$$$=\sup_{\substack{(f_{0},\dots,f_{n})\in\lh\times\dots\times\lh:\\ f_{0}\wedge\dots\wedge f_{n}\neq 0}}\frac{\left|\det\begin{pmatrix}
                                                     f_{0}(z)&\dots &f_{n}(z)\\
\frac{ \partial f_{0}}{\partial z_{1}}(z)&\dots & \frac{ \partial f_{n}}{\partial z_{1}}(z)\\
\vdots&\ddots &\vdots\\
\frac{ \partial f_{0}}{\partial z_{n}}(z)&\dots &\frac{ \partial f_{n}}{\partial z_{n}}(z)\\
                                                    \end{pmatrix}
\right|^2}{\det\begin{pmatrix} \langle f_{0},f_{0}\rangle_{\lh} &\cdots&\langle f_{n},f_{0}\rangle_{\lh}\\
 \vdots&\ddots&\vdots\\
  \langle f_{0},f_{n}\rangle_{\lh}&\cdots&\langle f_{n},f_{n}\rangle_{\lh} \end{pmatrix}}.$$
\end{theorem}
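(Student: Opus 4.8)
The plan is to read all three quantities in the claimed identity off the Hilbert-space picture set up in the previous section, and then to recognize the supremum as the computation of the operator norm of a bounded linear functional through its Riesz representative. Put $H:=\lh\wedge\cdots\wedge\lh$ ($n+1$ factors), so that by the identification recalled around (\ref{multilinear}) the space $F\wedge\cdots\wedge F$ is isometrically the dual Hilbert space $H'$, and set $\Phi:=i(z)\wedge j_{1}(z)\wedge\cdots\wedge j_{n}(z)\in F\wedge\cdots\wedge F=H'$. Then Lemma \ref{norm} says the left-hand side equals $\|\Phi\|_{F\wedge\cdots\wedge F}^{2}=\|\Phi\|_{H'}^{2}$; Lemma \ref{value} together with (\ref{multilinear}) says the numerator inside the supremum equals $|\Phi(f_{0}\wedge\cdots\wedge f_{n})|^{2}$; and (\ref{innern}), applied with $\alpha_{i}=\beta_{i}=f_{i}$, says the denominator equals $\|f_{0}\wedge\cdots\wedge f_{n}\|_{H}^{2}$ (so that the side condition $f_{0}\wedge\cdots\wedge f_{n}\neq0$ is exactly the linear independence of the $f_{i}$). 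Hence the theorem is equivalent to
$$\|\Phi\|_{H'}^{2}=\sup_{\substack{v=f_{0}\wedge\cdots\wedge f_{n}\neq0}}\frac{|\Phi(v)|^{2}}{\|v\|_{H}^{2}},$$
where, crucially, the supremum on the right runs only over \emph{decomposable} vectors of $H$, not over all of $H$.

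The key point is that the Riesz representative $h\in H$ of the functional $\Phi$ is itself a pure wedge. Let $\alpha_{0}:=i(z)$ and $\alpha_{k}:=j_{k}(z)$, $k=1,\dots,n$, with Riesz representatives $\alpha_{0}'=K(\cdot,z)$ and $\alpha_{k}'=\partial K(\cdot,\zeta)/\partial\bar\zeta_{k}|_{\zeta=z}$ supplied by (\ref{irep})--(\ref{jsrep}); I claim $h=\alpha_{0}'\wedge\cdots\wedge\alpha_{n}'$. By linearity and continuity it is enough to test this against decomposable $v=f_{0}\wedge\cdots\wedge f_{n}$: on one hand (\ref{innern}) gives $\langle v,\alpha_{0}'\wedge\cdots\wedge\alpha_{n}'\rangle_{H}=\det(\langle f_{i},\alpha_{j}'\rangle_{\lh})_{i,j}=\det(\alpha_{j}(f_{i}))_{i,j}$, while on the other hand (\ref{multilinear}) (equivalently Lemma \ref{value}) gives $\Phi(v)=\det(\alpha_{i}(f_{j}))_{i,j}$; since a square matrix and its transpose have equal determinants, $\Phi(v)=\langle v,h\rangle_{H}$, which proves the claim and, in particular, $\|h\|_{H}^{2}=\|\Phi\|_{H'}^{2}$. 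Moreover $h\neq0$: indeed $\|h\|_{H}^{2}$ equals the left-hand side of the theorem, which is strictly positive because $K>0$ and the Bergman metric tensor is positive definite (equivalently, $i(z),j_{1}(z),\dots,j_{n}(z)$ are linearly independent in $\lh'$, so $\alpha_{0}',\dots,\alpha_{n}'$ are linearly independent in $\lh$). Thus, writing $f_{k}:=\alpha_{k}'$, the pure wedge $h=f_{0}\wedge\cdots\wedge f_{n}$ is an admissible competitor in the supremum.

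It remains to run the two standard inequalities. For any admissible $v=f_{0}\wedge\cdots\wedge f_{n}\neq0$, Cauchy--Schwarz gives $|\Phi(v)|^{2}=|\langle v,h\rangle_{H}|^{2}\le\|v\|_{H}^{2}\,\|h\|_{H}^{2}$, so the displayed ratio is at most $\|h\|_{H}^{2}$; choosing $v=h$ makes this an equality. Hence the supremum equals $\|h\|_{H}^{2}=\|\Phi\|_{H'}^{2}$, which by Lemma \ref{norm} is $\left. K^{n+1}\det\left(\frac{\partial^{2}}{\partial z_{i}\partial\bar z_{j}}\log K\right)\right|_{z}$, as required.

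I expect the only genuinely delicate point to be the reduction of the supremum over decomposable vectors to the full dual norm $\|\Phi\|_{H'}$ — that is, the observation that the Riesz representative of $\Phi$ lies in the (non-linear) set of pure wedges. This works here precisely because $\Phi$ itself is a pure wedge and the Riesz correspondence acts factorwise on pure wedges; everything else is an unwinding of the definitions together with Lemmas \ref{value} and \ref{norm} and the elementary fact that $\|\ell\|^{2}=\sup_{v\neq0}|\ell(v)|^{2}/\|v\|^{2}$ for a bounded linear functional $\ell$ on a Hilbert space.
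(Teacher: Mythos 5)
Your proof is correct and follows essentially the same route as the paper's: both identify the left-hand side as the squared dual norm of the decomposable functional $i(z)\wedge j_{1}(z)\wedge\cdots\wedge j_{n}(z)$ and observe that its Riesz representative is itself a pure wedge, so the supremum over decomposable vectors already attains the full dual norm. The only difference is that you supply an explicit verification (testing against decomposable vectors) of the factorwise action of the Riesz map on pure wedges, which the paper merely asserts.
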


\begin{proof}

We have the isometry
$$\left(\lh\wedge\cdots\wedge\lh\right)^{'}\cong\lh'\wedge\cdots\wedge\lh'.$$
As usual the  norm of a linear functional is $$\Vert \alpha \Vert_{\left(\lh\wedge\cdots\wedge\lh\right)^{'}}=\sup_{}\frac{|\alpha(f)|}{\Vert f\Vert_{\lh\wedge\cdots\wedge\lh}},$$
where the supremum is taken over all nonzero $f\in \lh\wedge\cdots\wedge\lh$. By the Riesz theorem the supremum is achieved at the vector $f=\alpha'$. When $\alpha$ is decomposable, we use the fact that $\alpha'$ is also decomposable.  In fact if $\alpha=\alpha_{0}\wedge\dots\wedge\alpha_{n}$ then $\alpha'=\alpha_{0}'\wedge\dots\wedge\alpha_{n}'$.
By Lemma \ref{norm}, the decomposability of $i(z)\wedge j_{1}(z)\wedge \cdots\wedge  j_{n}(z)$ and Lemma \ref{value} we have
 $$\left. K^{n+1}\det\left(\frac{\partial^{2}}{\partial z_{i}\partial\bar z_{j}} \log K\right)\right|_{z}=\left\Vert i(z)\wedge j_{1}(z)\wedge \cdots\wedge  j_{n}(z)\right\Vert_{\lh'\wedge\cdots\wedge \lh'}^{2}$$
$$=\left\Vert i(z)\wedge j_{1}(z)\wedge \cdots\wedge  j_{n}(z)\right\Vert_{\left(\lh\wedge\cdots\wedge\lh\right)'}^{2}$$$$=\sup_{0\neq f_{0}\wedge\dots\wedge f_{n}\in \lh\wedge\cdots\wedge\lh}\frac{\left|i(z)\wedge j_{1}(z)\wedge \cdots\wedge  j_{n}(z)(f_{0},\dots,f_{n})\right|^{2}}{\Vert f_{0}\wedge\dots\wedge f_{n}\Vert_{\lh\wedge\cdots\wedge\lh}^{2}}$$

$$=\sup_{\substack{(f_{0},\dots,f_{n})\in\lh\times\dots\times\lh:\\ f_{0}\wedge\dots\wedge f_{n}\neq 0}}\frac{\left|\det\begin{pmatrix}
                                                     f_{0}(z)&\dots &f_{n}(z)\\
\frac{ \partial f_{0}}{\partial z_{1}}(z)&\dots & \frac{ \partial f_{n}}{\partial z_{1}}(z)\\
\vdots&\ddots &\vdots\\
\frac{ \partial f_{0}}{\partial z_{n}}(z)&\dots &\frac{ \partial f_{n}}{\partial z_{n}}(z)\\
                                                    \end{pmatrix}
\right|^2}{\det\begin{pmatrix} \langle f_{0},f_{0}\rangle_{\lh} &\cdots&\langle f_{n},f_{0}\rangle_{\lh}\\
 \vdots&\ddots&\vdots\\
  \langle f_{0},f_{n}\rangle_{\lh}&\cdots&\langle f_{n},f_{n}\rangle_{\lh} \end{pmatrix}}.$$
\end{proof}

\end{section}
\begin{section}{Proofs of the Theorems and open problems}
\begin{proof}[Proof of Theorem \ref{main}] We proceed as in \cite{MR0112162}.
 Suppose that the metric $\tilde T_{i\bar j}$ is not complete in $\Omega$. We choose a Cauchy (with respect to $\tilde T_{i\bar j}$) sequence $\lbrace z_{s}\rbrace_{s=1}^{\infty}\subset\Omega$  which has no convergent (again with respect to $\tilde T_{i\bar j}$) subsequence. Now we use Theorem \ref{isometry}  and embed holomorphically and isometrically $\Omega$ with the metric $\tilde T_{i\bar j}$ into $\mathbb P(F\wedge\cdots\wedge F)$ with the Fubini-Study metric by the mapping (\ref{embed}).  The image sequence $[i(z_{s})\wedge j_{1}(z_{s})\wedge \cdots\wedge  j_{n}(z_{s})]$ is also a Cauchy sequence with respect to the Fubini-Study metric, because isometries do not increase distance. The space $\mathbb P(F\wedge\cdots\wedge F)$ is, however, complete and hence the image sequence has a convergent subsequence (with respect to the Fubini-Study metric)  $[i(z_{s_{k}})\wedge j_{1}(z_{s_{k}})\wedge \cdots\wedge  j_{n}(z_{s_{k}})]$ with limit $f\in\mathbb P(F\wedge\cdots\wedge F)$. This means that also the unit vectors  
$$e^{i\theta_{k}}\frac{i(z_{s_{k}})\wedge j_{1}(z_{s_{k}})\wedge \cdots\wedge  j_{n}(z_{s_{k}})}{\Vert i(z_{s_{k}})\wedge j_{1}(z_{s_{k}})\wedge \cdots\wedge  j_{n}(z_{s_{k}})\Vert}\in F\wedge\cdots\wedge F ,$$
which represent the above classes, converge for a proper choice of $\theta_k\in[0,2\pi)$ in $F\wedge\cdots\wedge F$ to some $\alpha$, which represents the class $f$. Now  $\alpha$ is a unit vector and moreover by Lemma \ref{sequence} $\alpha=\alpha_{0}\wedge\dots\wedge\alpha_{n}$, for some $\alpha_{0},\dots,\alpha_{n}\in \lh'$. The vector $\alpha$ is nonzero and hence the $\alpha_{s}$s are linearly independent in $\lh'$. For each $\alpha_s$ take the Hilbert dual  $f_{s}\in\lh$. Clearly also the $f_{s}$s are linearly independent.  Now by Lemmas \ref{value}, \ref{norm} and the formula (\ref{multilinear})

$$\left.\begin{array}{c} \frac{\left|\begin{matrix}\det\end{matrix}\begin{pmatrix}
                                                     f_{0}(z)&\dots &f_{n}(z)\\
\frac{ \partial f_{0}}{\partial z_{1}}(z)&\dots & \frac{ \partial f_{n}}{\partial z_{1}}(z)\\
\vdots&\ddots &\vdots\\
\frac{ \partial f_{0}}{\partial z_{n}}(z)&\dots &\frac{ \partial f_{n}}{\partial z_{n}}(z)\\
                                                    \end{pmatrix}
\right|^2 }{\begin{matrix}K^{n+1}\det\left(\frac{\partial^{2}}{\partial z_{i}\partial\bar z_{j}} \log K\right)\end{matrix}}\end{array}\right|_{z=z_{s_{k}}}$$
$$=\frac{\left|i(z_{s_{k}})\wedge j_{1}(z_{s_{k}})\wedge \cdots\wedge  j_{n}(z_{s_{k}})(f_{0},\dots,f_{n})\right|^2}{\Vert i(z_{s_{k}})\wedge j_{1}(z_{s_{k}})\wedge \cdots\wedge  j_{n}(z_{s_{k}})\Vert^2} $$
$$\to |\alpha_{0}\wedge\dots\wedge\alpha_{n} (f_{0},\dots,f_{n})|^2=\det\begin{pmatrix} \langle f_{0},f_{0}\rangle_{\lh} &\cdots&\langle f_{n},f_{0}\rangle_{\lh}\\
 \vdots&\ddots&\vdots\\
  \langle f_{0},f_{n}\rangle_{\lh}&\cdots&\langle f_{n},f_{n}\rangle_{\lh} \end{pmatrix}.$$
This contradicts the assumptions of Theorem \ref{main}.
 
\end{proof}
Recall that the pluricomplex  Green function of the bounded domain $\Omega\subset\co^{n}$ with logarithmic singularity at $z\in\Omega$ is the function
$$G_{\Omega}(\cdot, z):=\sup_{\varphi\in PSH(\Omega)}\lbrace \varphi(\cdot): \varphi<0,  \limsup_{\zeta\to z}( f(\zeta)-\log|\zeta-z|)<\infty\rbrace,$$
where $PSH(\Omega)$ is the space of plurisubharmonic functions on $\Omega$.
The function $G_{\Omega}(\cdot, z)$ is plurisubharmonic and negative in $\Omega$. We will need a lemma.
\begin{lemma}\label{estimate}
 For every bounded pseudoconvex domain $\Omega$ there exists a constant $C>0$ such that for every $f\in\lh$ one can find $\tilde f\in \lh$ such that for given $z\in\Omega$, $f(z)=\tilde f(z)$ and $\frac{\partial f}{\partial z_j}(z)= \frac{\partial \tilde f}{\partial z_j}(z)$ and moreover $$\int_{\Omega}|\tilde f|^2 d\lambda\leq C\int_{\{G_{\Omega}(\cdot,z)<-1\}}|f|^2d \lambda,$$
where $d\lambda$ is the Lebesgue measure.
\end{lemma}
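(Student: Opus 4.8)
The plan is to turn the desired estimate into the solution of a $\bar\partial$-equation carrying a singular weight, following the pattern of Herbort's and B\l ocki's proofs that bounded hyperconvex domains are complete with respect to the Bergman metric (see \cite{MR1650305}, \cite{MR1714284}, \cite{MR2139520}). Fix $z\in\Omega$ and abbreviate $G:=G_{\Omega}(\cdot,z)$; recall that $G$ is negative and plurisubharmonic on $\Omega$, that $G(\zeta)\to-\infty$ as $\zeta\to z$, and that $G(\zeta)=\log|\zeta-z|+O(1)$ near $z$. I would fix once and for all a smooth $\chi\colon\mathbb{R}\to[0,1]$ with $\chi\equiv1$ on $(-\infty,-3]$ and $\chi\equiv0$ on $[-2,\infty)$, and set $v:=\chi(G)\,f$. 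On a small ball $B(z,r)\subset\subset\Omega$ one has $G<-3$, so $v\equiv f$ there; thus $v$ is holomorphic near $z$ and has the same $1$-jet as $f$ at $z$, and since $v$ vanishes outside $\{G<-2\}$ we get $\int_{\Omega}|v|^{2}\,d\lambda\le\int_{\{G<-1\}}|f|^{2}\,d\lambda$. The $(0,1)$-form $g:=\bar\partial v=\chi'(G)\,f\,\bar\partial G$ is $\bar\partial$-closed and lies in $L^{2}_{\mathrm{loc}}$ even when $G$ fails to be continuous, because $\bar\partial G\in L^{2}_{\mathrm{loc}}(\Omega)$ by the Bedford--Taylor calculus; moreover $g$ vanishes outside $\{-3<G<-2\}$, in particular identically near $z$. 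The construction is designed so that it now suffices to solve $\bar\partial u=g$ with a $u$ vanishing to order at least $2$ at $z$ and satisfying $\int_{\Omega}|u|^{2}\,d\lambda\le C\int_{\{G<-1\}}|f|^{2}\,d\lambda$: then $\tilde f:=v-u\in\lh$ is holomorphic, has the same $1$-jet as $f$ at $z$, and $\int_{\Omega}|\tilde f|^{2}\,d\lambda\le2\int_{\Omega}|v|^{2}\,d\lambda+2\int_{\Omega}|u|^{2}\,d\lambda\le(2+2C)\int_{\{G<-1\}}|f|^{2}\,d\lambda$.

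To obtain this $u$ I would invoke the Donnelly--Fefferman / Berndtsson $L^{2}$-estimate for $\bar\partial$ on the pseudoconvex $\Omega$, run with the two plurisubharmonic weights $\psi:=-\log(-G)$ and $\phi:=2(n+1)G$. The weight $\psi$ has self-bounded gradient, $i\partial\bar\partial\psi\ge i\partial\psi\wedge\bar\partial\psi=(-G)^{-2}\,i\partial G\wedge\bar\partial G$, and this is precisely the feature that controls the a priori unbounded factor $\bar\partial G$ inside $g$: it gives the pointwise bound $|g|^{2}_{i\partial\bar\partial\psi}\le|\chi'(G)|^{2}|f|^{2}G^{2}$, which on $\mathrm{supp}\,\chi'(G)=\{-3<G<-2\}$ is at most $9\Vert\chi'\Vert_{\infty}^{2}|f|^{2}$. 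The estimate then yields $u$ with $\bar\partial u=g$ and, writing $w:=e^{\psi-\phi}=(-G)^{-1}e^{-2(n+1)G}$,
\[
\int_{\Omega}|u|^{2}\,w\,d\lambda\ \le\ c_{0}\int_{\Omega}|g|^{2}_{i\partial\bar\partial\psi}\,w\,d\lambda\ \le\ C_{n}\int_{\{G<-1\}}|f|^{2}\,d\lambda ,
\]
where $c_{0}$ is an absolute constant and the last step uses $w\le\tfrac12 e^{6(n+1)}$ on $\{-3<G<-2\}$ (the right-hand side being finite since $f\in\lh$).

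Two elementary remarks about $w$ then complete the argument. First, $w(\zeta)=h(-G(\zeta))$ with $h(t)=e^{2(n+1)t}/t$ on $(0,\infty)$, and $\inf_{t>0}h(t)=2(n+1)e>0$; hence $w\ge 2(n+1)e$ throughout $\Omega$, and the displayed inequality gives $\int_{\Omega}|u|^{2}\,d\lambda\le C\int_{\{G<-1\}}|f|^{2}\,d\lambda$ with $C=C_{n}/(2(n+1)e)$, a constant depending only on $n$. Second, $g$ vanishes near $z$, so $u$ is holomorphic there; and since $G=\log|\zeta-z|+O(1)$ one has $w(\zeta)\ge c\,|\zeta-z|^{-2n-2}\bigl(\log(1/|\zeta-z|)\bigr)^{-1}$ near $z$. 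Because $\int_{B(z,r)}|\zeta-z|^{2k}\,|\zeta-z|^{-2n-2}\bigl(\log(1/|\zeta-z|)\bigr)^{-1}\,d\lambda=+\infty$ for $k=0$ and $k=1$, finiteness of $\int_{\Omega}|u|^{2}w\,d\lambda$ forces every Taylor coefficient of $u$ at $z$ of order $\le1$ to vanish, i.e.\ $u$ vanishes to order at least $2$ at $z$. Setting $\tilde f:=v-u$ then proves the lemma, with the constant in fact depending only on $n$.

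The one point I expect to require care is that the single weight pair $(\psi,\phi)=\bigl(-\log(-G),\,2(n+1)G\bigr)$ must accomplish three things at once: the self-bounded gradient of $\psi$ has to turn the uncontrolled size of $\bar\partial G$ into a bounded quantity, so that plain H\"ormander (whose right-hand side uses the inverse Hessian of the weight) does not suffice and the Donnelly--Fefferman refinement is essential; the pole of $e^{-\phi}$ at $z$ must have order at least $2n+2$, so that a holomorphic solution is forced to vanish to second order there (order $2n+1$ would already be too weak to kill the first derivatives); and the product weight $w=e^{\psi-\phi}$ must be bounded below on $\Omega$, so that the weighted inequality can be downgraded to the unweighted $L^{2}$ bound with a constant independent of $z$ and $f$. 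Everything else — the inequality $i\partial\bar\partial(-\log(-G))\ge(-G)^{-2}i\partial G\wedge\bar\partial G$, the pointwise bound for $|g|^{2}_{i\partial\bar\partial\psi}$, and the convergence bookkeeping near $z$ — is routine.
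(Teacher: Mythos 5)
The paper itself offers no proof of this lemma: it simply cites Lemma~4.2 of Diederich--Herbort \cite{MR1799743}, whose argument is a weighted $L^{2}$-estimate for $\bar\partial$ with a weight built from the pluricomplex Green function (whence the explicit constant $1+e^{4n+7+(\max_{\Omega}|z|)^{2}}$). Your reconstruction --- cut $f$ off by $\chi(G)$, solve $\bar\partial u=\chi'(G)\,f\,\bar\partial G$ against a weight with a pole of order $2n+2$ at $z$ so that the holomorphic-near-$z$ solution is forced to vanish to second order, and use the self-bounded gradient of $-\log(-G)$ to absorb the uncontrolled factor $\bar\partial G$ --- is exactly this circle of ideas, and your quantitative checks (the bound $|g|^{2}_{i\partial\bar\partial\psi}\le 9\Vert\chi'\Vert_{\infty}^{2}|f|^{2}$ on $\{-3<G<-2\}$, the divergence of $\int_{0}^{r}\rho^{2k-3}\,d\rho$ for $k\le 1$, the boundedness of the weight on $\operatorname{supp}g$, the independence of the constant from $z$ and $f$) are all correct.

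One step needs repair, though the repair costs nothing. The inequality you invoke, $\int|u|^{2}e^{\psi-\phi}\le c_{0}\int|g|^{2}_{i\partial\bar\partial\psi}e^{\psi-\phi}$ with the \emph{full} gain $e^{\psi}$ on both sides and $g$ measured in $i\partial\bar\partial\psi$, is not the standard Donnelly--Fefferman/Berndtsson statement. The version with an arbitrary plurisubharmonic weight reads $\int|u|^{2}e^{-\varphi}\le 4\int|g|^{2}_{i\partial\bar\partial\psi}e^{-\varphi}$ (see \cite{MR2139520}), and you cannot feed it $\varphi=\phi-\psi=2(n+1)G+\log(-G)$, which is not plurisubharmonic; the Berndtsson--Charpentier variants that do carry a gain $e^{\delta\psi}$ require $\delta<1$ or measure $g$ in $i\partial\bar\partial\phi$, which is useless here since $i\partial\bar\partial G$ may vanish where $\partial G$ does not. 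Fortunately the factor $e^{\psi}=(-G)^{-1}$ does no work for you: because $G<0$, the weight $e^{-\phi}=e^{-2(n+1)G}$ is already $\ge 1$ on all of $\Omega$, is $\le e^{6(n+1)}$ on $\{-3<G<-2\}$, and blows up like $|\zeta-z|^{-2n-2}$ at $z$; so applying the standard estimate with $\varphi:=2(n+1)G$ delivers all three properties you list at once and makes your ``first elementary remark'' about $\inf_{t>0}h(t)$ unnecessary. The remaining technicality --- $G$ need not be smooth, so one should note that $v$ and $g$ are unchanged if $G$ is replaced by the bounded plurisubharmonic function $\max(G,-4)$ and then regularize before applying the estimate --- is indeed routine, as you say.
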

This is a simpler version of  Lemma $4.2$ in \cite{MR1799743}. The proof uses H\"ormander's estimates for the $\bar\partial$- equation and can be found in \cite{MR1799743}. The constant $C$ can be chosen to be $1+e^{4n+7+(max_{\Omega}|z|)^2}$.

\begin{proof}[Proof of Theorem \ref{hyperc}]

  At the point $z_{s_{k}}$ we construct the corresponding functions $\tilde f_{j}$ for each of the functions $f_{j},j=0,\dots, n$ from Lemma \ref{estimate}. Then by Theorem \ref{fraction}, Hadamard's inequality and Lemma \ref{estimate} we have 
$$\left.\begin{array}{c} \frac{\left|\begin{matrix}\det\end{matrix}\begin{pmatrix}
                                                     f_{0}(z)&\dots &f_{n}(z)\\
\frac{ \partial f_{0}}{\partial z_{1}}(z)&\dots & \frac{ \partial f_{n}}{\partial z_{1}}(z)\\
\vdots&\ddots &\vdots\\
\frac{ \partial f_{0}}{\partial z_{n}}(z)&\dots &\frac{ \partial f_{n}}{\partial z_{n}}(z)\\
                                                    \end{pmatrix}
\right|^2 }{\begin{matrix}K^{n+1}\det\left(\frac{\partial^{2}}{\partial z_{i}\partial\bar z_{j}} \log K\right)\end{matrix}}\end{array}\right|_{z=z_{s_{k}}}=\left.\begin{array}{c} \frac{\left|\begin{matrix}\det\end{matrix}\begin{pmatrix}
                                                     \tilde f_{0}(z)&\dots & \tilde f_{n}(z)\\
\frac{ \partial \tilde f_{0}}{\partial z_{1}}(z)&\dots & \frac{ \partial \tilde f_{n}}{\partial z_{1}}(z)\\
\vdots&\ddots &\vdots\\
\frac{ \partial \tilde f_{0}}{\partial z_{n}}(z)&\dots &\frac{ \partial \tilde f_{n}}{\partial z_{n}}(z)\\
                                                    \end{pmatrix}
\right|^2 }{\begin{matrix}K^{n+1}\det\left(\frac{\partial^{2}}{\partial z_{i}\partial\bar z_{j}} \log K\right)\end{matrix}}\end{array}\right|_{z=z_{s_{k}}}$$
$$\leq \det\begin{pmatrix} \langle \tilde f_{0},\tilde f_{0}\rangle_{\lh} &\cdots&\langle \tilde f_{n},\tilde f_{0}\rangle_{\lh}\\
 \vdots&\ddots&\vdots\\
  \langle \tilde f_{0},\tilde f_{n}\rangle_{\lh}&\cdots&\langle \tilde f_{n},\tilde f_{n}\rangle_{\lh} \end{pmatrix} \leq \Vert \tilde f_{0}\Vert_{\lh}^{2}\dots\Vert \tilde f_{0}\Vert_{\lh}^{2}$$$$\leq C^{n+1} \int_{\{G_{\Omega}(\cdot,z_{s_{k}})<-1\}}|f_{0}|^2 d\lambda\dots\int_{\{G_{\Omega}(\cdot,z_{s_{k}})<-1\}}|f_{n}|^2d \lambda\to 0,$$
because each $f_{j}\in\lh$ and the volume of $\{G_{\Omega}(\cdot,z)<-1\}$ goes to $0$ as $k\to\infty$ in bounded hyperconvex domains, see \cite{MR1650305} or \cite{MR1714284}.

\end{proof}

We do not know whether or not there exist domains for which one of the metrics $T_{i\bar j}$, $\tilde T_{i\bar j}$ is complete and the other is not.

Despite the suggestion in \cite{MR0112162} that domains in which the Bergman metric is complete should satisfy the conditions in Theorem \ref{koba}, Zwonek in \cite{MR1869096} constructed a domain for which the Bergman metric is complete and the limit in  (\ref{weakkob}) is not zero. This means that the Kobayashi criterion is not a if and only if statement. As noted  in \cite{MR2139520} it is not known whether or not the modified version Theorem \ref{blo} is a if and only if statement.
Likewise we do not know whether or not the criterion in Theorem \ref{main} is a if and only if statement. We do not know this even if the limit in (\ref{inmain}) is assumed to be $0$. 

\end{section}
\bibliographystyle{amsplain.bst}
\def\cprime{$'$}
\providecommand{\bysame}{\leavevmode\hbox to3em{\hrulefill}\thinspace}
\providecommand{\MR}{\relax\ifhmode\unskip\space\fi MR }
% \MRhref is called by the amsart/book/proc definition of \MR.
\providecommand{\MRhref}[2]{%
  \href{http://www.ams.org/mathscinet-getitem?mr=#1}{#2}
}
\providecommand{\href}[2]{#2}

\end{document}